\documentclass[11pt]{article}
\usepackage[section]{placeins}
\usepackage{multirow}
\usepackage{geometry}
\usepackage{amssymb}
\usepackage{amsmath}
\usepackage{float}
\usepackage{color}
\usepackage{graphicx}
\usepackage{subfigure}
\usepackage{tabu}
\usepackage{booktabs}
\usepackage{array}
\usepackage{caption}
\usepackage{amsthm}
\usepackage{booktabs}
\usepackage{cite}
\usepackage{indentfirst}
\numberwithin{equation}{section}
\newtheorem{theorem}{\bf Theorem}[section]
\usepackage{algpseudocode}
\usepackage{algorithm}
\usepackage{algorithmicx}

\usepackage{booktabs}
\usepackage{hyperref}
\usepackage{arydshln}
\allowdisplaybreaks[4]
\textwidth=7.0in
\textheight=7.5in
\oddsidemargin=-0.25in
\evensidemargin=-0.25in
\baselineskip=6pt
\parindent0.0in
\parskip6pt
\topmargin=-0.5in
\topskip =0 in
\footskip=0.5in
\hoffset = 0.0in
\voffset = 0.0in
\geometry{a4paper,left=2cm,right=2cm,top=2.5cm,bottom=3.0cm}
\makeatletter
\@addtoreset{equation}{section}
\makeatother

\title{
	Randomized GCUR decompositions
}
\author{
	Zhengbang Cao\footnote{School of Mathematical Sciences, Ocean University of China, Qingdao 266100, China.
		E-Mail: {\tt caozhengbang@stu.ouc.edu.cn}}
	\and Yimin Wei\footnote{School of Mathematical Sciences and and Key Laboratory of Mathematics for Nonlinear Sciences, Fudan University, Shanghai 200433, China.
		E-Mail: {\tt ymwei@fudan.edu.cn}}
	\and	Pengpeng Xie\footnote{Corresponding author (P. Xie). School of Mathematical Sciences, Ocean University of China, Qingdao 266100, China.
		E-Mail: {\tt xie@ouc.edu.cn}.
	}
}
\date{}

\begin{document}
	\maketitle
	\begin{abstract}
		By exploiting the random sampling techniques, this paper derives an efficient randomized algorithm for computing a generalized CUR decomposition, which provides low-rank approximations
		of both matrices simultaneously in terms of some of their rows and columns.
		For	large-scale data sets that are expensive to store and manipulate, a new variant of the discrete empirical interpolation method known as L-DEIM, which needs much lower cost and provides a significant acceleration in practice, is also combined with the random sampling approach to further improve the efficiency of our algorithm.
		Moreover, adopting the randomized algorithm to implement the truncation process of restricted singular value decomposition (RSVD), combined with the L-DEIM procedure, we propose a fast algorithm for computing an RSVD based CUR decomposition, which provides a coordinated low-rank approximation of the three matrices in a CUR-type format simultaneously and provides advantages over the standard CUR approximation for some applications. We establish detailed probabilistic error analysis for the algorithms and provide numerical results that show the promise of our approaches.
		\\ \hspace*{\fill} \\
		{\bf Keywords:} generalized CUR decomposition; generalized SVD; restricted SVD; L-DEIM; randomized algorithm
		\\ \hspace*{\fill} \\
	{\bf Mathematics Subject Classification:} 65F55, 15A23\\
	\end{abstract}
	\section{Introduction}
	\hskip 2em
	The ability to extract meaningful insights from large and complex datasets is a key challenge in data analysis,
	and most efforts have been focused on manipulating, understanding and interpreting large-scale data matrices.
	In many cases, matrix factorization
	methods are employed for constructing parsimonious and informative representations to facilitate computation and interpretation.
	A principal approach is the CUR decomposition \cite{drineas2008SIAMrelative,sorensen2016SIAMdeim,mahoney2009PNAScur,wang2013JMLRimproving},
	which is a low-rank approximation of a matrix $A\in\mathbb{R}^{m\times n}$ of the form
	\begin{equation}\label{CUR decomposition}
		A \approx CUR,
	\end{equation}
	where matrices $C\in\mathbb{R}^{m\times k_1}$ and $R\in\mathbb{R}^{k_2\times n}$ are subsets of the columns and rows, respectively, of the original matrix $A$.
	The $k_1 \times k_2$ matrix $U$ is constructed to guarantee that CUR is a well-approximated decomposition.
	As described in \cite{sorensen2016SIAMdeim}, the CUR decomposition is a powerful technique for handling large-scale data sets,
	providing two key advantages over the singular value decomposition (SVD)
	$A\approx VSW^{\mathrm{T}}$
	: first, when $A$ is sparse, so too are $C$ and $R$, unlike the dense matrices $V$ and $W$ of singular vectors;
	second, the matrices $C$
	and $R$ contains actual columns and rows of the original matrix, preserving certain features such as sparse, nonnegative, integer-valued and so on.
	
	\hskip 2em
	These novel properties of CUR are desirable for feature
	selection and data interpretation, leading to extensive research and also making CUR attractive in a wide range of applications \cite{boutsidis2014optimal,wang2013JMLRimproving,hamm2021SIAMperturbations,cai2021SIAMrobust,cai2020IEEErapid,cai2022Arixvmatrix}.
	
	Recently, in \cite{gidisu2022SIAMgeneralized}, Gidisu and Hochstenbach developed a generalized CUR decomposition (GCUR) for matrix pair $A$ and $B$ with the same number of columns:
	$A$ is $m \times n$, $B$ is $d \times n$ and both are of full column rank, which can be viewed as a CUR decomposition of $A$ relative to $B$.
	{This novel decomposition is suitable for applications where the goal is to extract the most distinctive information from a particular data set compared to another.}
	Furthermore, it is well known that canonical correlation analysis (CCA) \cite{xu2013ARXIVsurvey} is an extremely useful
	statistical method to analyze the correlation between two sets of variables,  which has been applied in a variety of fields, including economics, psychology, and biology, among others.
	Motivated by CCA, in \cite{gidisu2022ARXIVrsvd}, Gidisu and Hochstenbach developed a new coordinated CUR factorization of a matrix
	triplet $(A, B, G)$ of compatible dimensions, based on the restricted singular value decomposition (RSVD) \cite{zha1991SIAMrestricted}.
	This factorization was called an RSVD based CUR (RSVD-CUR) factorization.
	The RSVD-CUR factorization serves as a valuable instrument for facilitating multi-view/label dimensionality reduction, as well as addressing a specific class of perturbation problems.
	Typically, the identification of the optimal subset of rows and columns to generate a CUR-type factorization involves several methods.
	Two sampling techniques employed in \cite{gidisu2022ARXIVrsvd,gidisu2022SIAMgeneralized} are named DEIM \cite{barrault2004CRMempirical,chaturantabut2010SIAMnonlinear} and L-DEIM \cite{gidisu2022Arxivhybrid}.
	Specifically, as the inputs, the DEIM and L-DEIM require the generalized SVD (GSVD) of the matrix pair $(A,B)$ and the RSVD of the matrix triplet $(A, B, G)$ for sampling when constructing the GCUR and RSVD-CUR decomposition, respectively.
	The overall computational complexity of the algorithms discussed in \cite{gidisu2022ARXIVrsvd,gidisu2022SIAMgeneralized} is dominated by the construction of the GSVD and the RSVD.
	However, in practice, this cost can	be prohibitively expensive, making it unsuitable for large-scale applications.
	
	\hskip 2em
	The utility of randomized algorithms in facilitating the process of matrix decomposition has been well-established.
	Such algorithms serve to reduce both the computational complexity of deterministic methods and the communication among different levels of memories that can severely impede the performance of contemporary computing architectures.
	Based on the framework in \cite{halko2011SIAMfinding}, many computationally efficient methods for implementing large-scale matrix factorizations have been proposed, analyzed, and implemented, such as \cite{wei2016SIAMtikhonov,wei2021CAMCrandomized,saibaba2021NLAArandomized,saibaba2016NLAArandomized}.
	Meanwhile, these well-established randomized algorithms have been widely used for many
	practical applications, such as the least squares problems \cite{xie2019NLAArandomized,boutsidis2009LAArandom,zhang2020JCAMrandomized} and Tikhonov regularization \cite{xiang2013IPregularization,rachkovskij2012CSArandomized}.
	Motivated by this success, in this work we introduce the randomized
	schemes for efficiently computing the GCUR and the RSVD-CUR decomposition.
	To be specific, there are two main computational stages involved in our randomized algorithms.
	In the first stage, we use random projections to identify a subspace that captures most of the action of the input matrix.
	Then we project the input matrix onto this subspace and get a reduced matrix which is then manipulated deterministically to obtain the desired low-rank approximation of the GSVD and RSVD.
	The	second stage can be accomplished with well-established deterministic methods DEIM and L-DEIM, operating on the approximation obtained in the first stage to sample the columns and rows of the original matrices.
	Compared with non-random approaches, our algorithms allow for comparable accuracy with much lower cost and will be more computationally efficient on large-scale data.
	Details of the algorithm, theoretical analysis and numerical results are provided to show the effectiveness of our approaches.
	
	\hskip 2em
	The rest of this paper is organized as follows.
	In Section 2, we first give a concise overview of the GSVD and the RSVD,
	then we introduce some basic notation and describe several sampling techniques including the DEIM and L-DEIM.
	Next, in Section 3, we present our randomized algorithms for computing the GCUR factorization using the DEIM and L-DEIM procedures, where the probabilistic error bound is also presented in detail.
	In Section 4, we first briefly review the literature on existing algorithms for the computation of the RSVD, and develop an efficient method for computing this decomposition.
	Then we develop randomized algorithms for computing the RSVD-CUR decomposition based on the sampling procedure L-DEIM, along with detailed probabilistic error analysis.
	In Section 5, we test the performance of the proposed algorithms on several synthetic matrices and real-world datasets.
	Finally, in Section 6, we end this paper with concluding remarks.

	\section{Preliminaries}
	\hskip 2em
	In this paper, we adopt the MATLAB notation for indexing matrices and vectors.
	Specifically, we use $X(\mathbf{q},:)$ to represent the $k$ rows of $X$, which are determined by the values of the vector $\mathbf{q}\in\mathbb{N}^k_+$,
	and $X(:,\mathbf{p})$ to denote the $k$ columns of $X$ that correspond to the indices in $\mathbf{p}$.
	Additionally, we denote the 2-norm of matrices and vectors by $\| \cdot \|$,
	and utilize $A^{\dagger}$ to represent the Moore-Penrose pseudoinverse \cite{wei2018WSnumerical} of $A$.


	\subsection{GSVD and RSVD}
	\hskip 2em
	We provide a concise review of the GSVD and RSVD which constitute the fundamental components of the proposed algorithms.
	To be consistent with \cite{gidisu2022SIAMgeneralized}, this paper employs the formulation of GSVD proposed by Van Loan in \cite{van1985NMcomputing},
	while additional formulations and contributions to the GSVD can be found in
	\cite{van1976SIAMgeneralizing, paige1981SIAMtowards,huang2022JSC,stewart1982NMcomputing,sun1983SIAMperturbation,bai1993SIAMcomputing,zha1996NMcomputing}.
	Let
	$A\in\mathbb{R}^{m \times n}$ and $B\in\mathbb{R}^{d \times n}$
	with both $ m\ge n$ and $d \ge n$, then there exist orthogonal matrices
	$U\in\mathbb{R}^{m \times m}$, $V\in\mathbb{R}^{d\times d}$
	and a nonsingular $Y\in\mathbb{R}^{n \times n}$ such that
	\begin{equation}\label{GSVD of B}
		 B = V \Sigma Y^{\mathrm{T}},\
		 \Sigma=\operatorname{diag}(\beta_1,\ldots,\beta_n),\  \beta_i\in\left[0,1\right],
	\end{equation}
	\begin{equation}\label{GSVD of A}
		A = U \Gamma Y^{\mathrm{T}},\
		\Gamma=\operatorname{diag}(\gamma_1,\ldots,\gamma_n),\  \gamma_i\in\left[0,1\right],
	\end{equation}
	where $\gamma_i^2+\beta_i^2=1$ and the ratios $\gamma_i/\beta_i$ are in a non-increasing order for $i=1,\ldots,n.$
	Further, nonnegative number pairs $\{\gamma_i,\beta_i\}_{i=1}^n$ are actually the generalized singular values of the matrix pair $(A,B)$ as defined in \cite{sun1983SIAMperturbation}, and
	the sensitivity of the generalized singular values of a matrix pair to
	perturbations in the matrix elements was analyzed in \cite{sun1982MNSperturbation,li1993SIAMbounds,sun1983SIAMperturbation}.
	
	\hskip 2em
	The RSVD \cite{de1991SIAMrestricted,zha1991SIAMrestricted} is the factorization of a given
	matrix, relative to two other given matrices, which can be interpreted as the ordinary singular value decomposition 
    with different inner products in the row and column spaces.
	Consider the matrix triplet
	$A\in\mathbb{R}^{m\times n}$, $B\in\mathbb{R}^{m\times l}$ and $G\in\mathbb{R}^{d\times n}$, where $\ell \ge d \ge m \ge n$.
	We assume that $B$ and $G$ are of full rank.
	Following the formulation of the RSVD proposed by Zha \cite{zha1991SIAMrestricted}, there exist
	orthogonal matrices $U\in\mathbb{R}^{l\times l}$, $V\in\mathbb{R}^{d\times d}$
	and nonsingular matrices $Z\in\mathbb{R}^{m\times m}$ and $W\in\mathbb{R}^{n\times n}$ such that
	\begin{equation}\label{RSVD of (A,B,G)}
		A=ZD_AW^{\mathrm{T}},\  B=ZD_BU^{\mathrm{T}},\  G=VD_GW^{\mathrm{T}},
	\end{equation}
	or alternatively it can be expressed conveniently as
	\begin{equation*}
		\left[\begin{array}{ll}
			A & B \\
			G &
		\end{array}\right]=\left[\begin{array}{ll}
			Z & \\
			& V
		\end{array}\right]\left[\begin{array}{ll}
			D_A & D_B \\
			D_G &
		\end{array}\right]\left[\begin{array}{ll}
			W & \\
			& U
		\end{array}\right]^{\mathrm{T}},
	\end{equation*}
	where $D_A\in\mathbb{R}^{m\times n}$, $D_B\in\mathbb{R}^{m\times l}$, and $D_G\in\mathbb{R}^{d\times n}$ are nonnegative diagonal matrices.
	\subsection{Subset Selection Procedure}\label{subsection: Subset selection procedure}
	\hskip 2em
	In this section, we describe three different techniques for extracting appropriate subsets of columns or rows from matrices, that are the deterministic leverage score sampling procedure, the DEIM algorithm and the L-DEIM algorithm.
	
	\hskip 2em
	Given $A\in \mathbb{R}^{m \times n}$ with $\mathrm{rank}(A)\ge k$.
	Let $V_k$ contain its $k$ leading right singular vectors, and we denote the $i$th row of $V_k$ by $\left[V_k\right]_{i,:}$.
	Then the rank-$k$ leverage score of the $i$th column of $A$ is defined as
	\begin{equation*}
		\ell_i^k=\left\|\left[V_k\right]_{i,:}\right\|^2, \quad i=1, \ldots, n.
	\end{equation*}
	The deterministic leverage score sampling procedure  \cite{jolliffe1972JRSSdiscarding,papailiopoulos2014ACMprovable}
	selects the columns of $A$ that correspond to the largest leverage scores $\ell_i^k$, for a given $k\le \mathrm{rank}(A)$.
	From a practical perspective, this deterministic procedure 
	is notably straightforward to implement and computationally efficient,
	but 
	it lacks the capability to provide rigorous performance guarantees.

	\hskip 2em
	The DEIM selection algorithm was initially introduced in \cite{chaturantabut2010SIAMnonlinear}
	as a technique for model order reduction in nonlinear dynamical systems,
	and was employed to produce a two-side interpolatory decomposition in \cite{voronin2014ARxivcur}.
	In order to better understand the DEIM algorithm, it is necessary to first introduce the interpolatory projectors.
	Given a full column rank basis matrix $V\in \mathbb{R}^{m \times k}$ with $k\le m$ and a set of distinct indices $\mathbf{p}$, the interpolatory projector for $\mathbf{p}$ onto the range of $V$ $\mathrm{Ran}(V)$ is
	\begin{equation*}
		\mathbb{P}=V(P^{\mathrm{T}}V)^{-1}P^{\mathrm{T}},
	\end{equation*}
	where $P=I(:,\mathbf{p})\in \mathbb{R}^{m \times k}$, provided $P^{\mathrm{T}}V$ is invertible.
	The oblique projector $\mathbb{P}$ has an important property:
	for any vector $x\in \mathbb{R}^{m}$,
	\begin{equation}
		(\mathbb{P} x)(\mathbf{p})
		=P^{\mathrm{T}} \mathbb{P} x
		=P^{\mathrm{T}} V \left(P^{\mathrm{T}} V\right)^{-1} P^{\mathrm{T}} x
		=P^{\mathrm{T}} x
		=x(\mathbf{p}),
	\end{equation}
	hence the projected vector $Px$ matches $x$ in the entries corresponding to the indices in $\mathbf{p}$.
	The DEIM procedure processes the columns of $V$ sequentially, starting from the most significant singular vector to the least significant one.
	The first index corresponds to the largest magnitude entry in the first dominant singular vector.
	The next index corresponds to the largest entry in the subsequent singular vectors, after the interpolatory projection in the previous direction has been removed.
	See Algorithm \ref{Al-DEIM} for details.
	\begin{algorithm}[htb]
		\caption{DEIM index selection \cite{chaturantabut2010SIAMnonlinear} }
		\label{Al-DEIM}
		\hspace*{0.02in} {\bf Input:}
		$V \in \mathbb{R}^{m \times k}$ with $k \leq \mathrm{min}(m,n)$.
		\\
		\hspace*{0.02in} {\bf Output:}
		column index $\mathbf{p}\in\mathbb{N}^k_{+}$, with non-repeating entries,
		$V \in \mathbb{R}^{m \times k}$ with $k \leq \mathrm{min}(m,n)$.
		\begin{algorithmic}[1]
			\State  $v=V(:, 1)$.
			\State $p_{1}=\operatorname{argmax}_{1 \leq i \leq m}\left|v_{i}\right|$.
			\State $\mathbf{p}=[\begin{array}{l}
				p_1
			\end{array}]$.
			\For{$j=2, \ldots, k$}
			\State $v=V(:, j)$.
			\State $r=v-V(:, 1: j-1) (V(\mathbf{p}, 1: j-1)\backslash v(\mathbf{p}))$.
			\State $p_{j}=\operatorname{argmax}_{1 \leq i \leq m}\left|{r}_{i}\right|$.
			\State $\mathbf{p}=\left[\begin{array}{ll}\mathbf{p} & p_{j}\end{array}\right]$.
			\EndFor
		\end{algorithmic}
	\end{algorithm}
	
	\hskip 2em
	In \cite{sorensen2016SIAMdeim}, Sorensen and Embree adapted the DEIM in the context of subset selection to matrix CUR factorization, illustrating its favorable performance outperforms the leverage scores method.
	However, when used for index selection, the DEIM algorithm has an evident drawback:
	the quantity of singular vectors available sets a limit on the number of indices that can be selected.
	
	\hskip 2em
	This problem can be effectively resolved by employing the L-DEIM algorithm (Algorithm \ref{Al-LDEIM}), which was proposed by Gidisu and Hochstenbach \cite{gidisu2022Arxivhybrid}.
	This novel variation of the DEIM algorithm is a fusion of DEIM and the leverage score sampling method, resulting in an approach that offers enhanced efficiency and comparable accuracy.
	Specifically, while constructing a rank-${k}$ CUR decomposition, the first $\widehat{k}$ indices are obtained from the standard DEIM procedure, where $\widehat{k}<k$.
	Then we calculate the leverage scores of the residual singular vector, sorted in descending order, and output the additional $k-\widehat{k}$ indices that correspond to the largest $k-\widehat{k}$ leverage scores.
	According to the conclusion summarized in \cite{gidisu2022Arxivhybrid}, the accuracy of the L-DEIM procedure can be comparable with the DEIM when the target rank $k$ is at most twice the available $\widehat{k}$ singular vectors, and empirically, we can set $\widehat{k}=k/2$.
	
	
	\begin{algorithm}[htb]
		\caption{L-DEIM index selection \cite{gidisu2022Arxivhybrid} }
		\label{Al-LDEIM}
		\hspace*{0.02in} {\bf Input:}
		$V \in \mathbb{R}^{m \times \widehat{k}}$, target rank $k$ with $\widehat{k} \leq k \leq \min (m, n)$.
		\\
		\hspace*{0.02in} {\bf Output:}
		column indices $\mathbf{p}\in\mathbb{N}^k_{+}$ with non-repeating entries.
		\begin{algorithmic}[1]
			\For{$j=1,\ldots,\widehat{k}$}
			\State $\mathbf{p}(j)=\operatorname{argmax}_{1 \leq i \leq m}\left|(V(:, j))_{i}\right|$.
			\State $V(:, j+1)=V(:, j+1)-V(:, 1: j) \cdot(V(\mathbf{p}, 1: j) \backslash V(\mathbf{p}, j+1))$.
			\EndFor
			\State Compute $\ell_{i}=\left\|V_{i:}\right\|^2 \quad$ for $i=1, \ldots,m$.
			\State Sort $\ell$ in non-increasing order.
			\State Remove entries in $\ell$ corresponding to the indices in $\mathbf{p}$.
			\State $\mathbf{p}^{\prime}=k-\widehat{k}$ indices corresponding to $k-\widehat{k}$ largest entries of $\ell$.
			\State $\mathbf{p}=\left[\mathbf{p} ; \mathbf{p}^{\prime}\right]$.
		\end{algorithmic}
	\end{algorithm}
	\section{Randomization for GCUR}
	\hskip 2em
	In this section, we first give a brief introduction to the GCUR factorization.
	Moreover, by combining the random sampling techniques with the DEIM and L-DEIM procedures, we establish two versions of efficient randomized algorithms for computing this factorization, along with the detailed probabilistic error analysis for our approaches.
	
	\subsection{GCUR}
	\hskip 2em
	In \cite{gidisu2022SIAMgeneralized}, Gidisu and Hochstenbach developed a novel matrix factorization called GCUR decomposition, which applies to a pair of matrices $A$ and $B$ with the equal number of columns.
	The authors explain that this factorization can be regarded as a CUR decomposition of $A$ relative to $B$. 
	Given a matrix pair $(A,B)$, where $A$ is $m \times n$ and $B$ is $d \times n$ and both are of full column ranks with $m\ge n$ and $d\ge n$, then the rank-$k$ GCUR decomposition of $(A,B)$ is a matrix approximation of $A$ and $B$ expressed as
	\begin{equation}\label{GCUR of A}
		A\approx C_A M_A R_A
		=A(:,\mathbf{p})\ M_A\ A(\mathbf{s}_A,:),
	\end{equation}
	\begin{equation}\label{GCUR of B}
		B\approx C_B M_B R_B
		=B(:,\mathbf{p})\ M_B\ B(\mathbf{s}_B,:).
	\end{equation}
	Here matrices $C_A$ and $C_B$ indexed by the vector $\mathbf{p}$ are the subsets of the columns of $A$ and $B$, capturing the most relevant information of the original matrix.
	Meanwhile, $R_A$ and $R_B$ are formed by extracting $k$
	rows from $A$ and $B$, where the selected row
	indices are stored in the vectors $\mathbf{s}_A$ and $\mathbf{s}_B$, respectively.
	Once the row/column indices have been specified, one can choose different ways to construct the middle matrices $M_A$ and $M_B$.
	Following the work in \cite{sorensen2016SIAMdeim,mahoney2009PNAScur,stewart1999NMfour}, the authors in \cite{gidisu2022SIAMgeneralized} construct $M_A$ and $M_B$ as
	\begin{equation*}
		M_A=C_A^{\dagger} A R_A^{\dagger}
		   =(C_A^{\mathrm{T}}C_A)^{-1}C^{\mathrm{T}}_A A R_A^{\mathrm{T}}(R_AR_A^{\mathrm{T}})^{-1},
	\end{equation*}
	\begin{equation*}
		M_B=C_B^{\dagger} B R_B^{\dagger}
		   =(C_B^{\mathrm{T}}C_B)^{-1}C^{\mathrm{T}}_B B R_B^{\mathrm{T}}(R_B R_B^{\mathrm{T}})^{-1},
	\end{equation*}
	yielding the GCUR decomposition that can be realized by the following steps:
	first, the columns of $A$ are projected onto the range of $C_A$;
	then projecting it onto the row space of $R_A$. 
	
	\hskip 2em
	In essence, this factorization is a generalization of the CUR decomposition, and it has a close connection with the CUR decomposition of $AB^{\dagger}$.
	A detailed discussion of the properties can be found in \cite[Proposition 4.2]{gidisu2022SIAMgeneralized}.
	In \cite{gidisu2022SIAMgeneralized}, the choice for the sampling indices is guided by knowledge of the GSVD.
	Specifically, given the GSVD for matrix pair of the form (\ref{GSVD of A}) and (\ref{GSVD of B}), the DEIM procedure uses $U$, $V$ and $Y$ to select the indices $\mathbf{s}_A$, $\mathbf{s}_B$ and $\mathbf{p}$ respectively.
	\cite[Algorithm 4.1]{gidisu2022SIAMgeneralized} is a summary of this procedure. 
	
	\hskip 2em
	As summarized in \cite{gidisu2022SIAMgeneralized}, the overall complexity of the algorithm  is chiefly governed by the construction of the GSVD, which costs $\mathcal{O}((m+n+d)n^2)$.
	Nevertheless, this computational cost can be prohibitively expensive when the dimensions are very large, making it difficult for large-scale applications.
	To tackle the large-scale problems where a full GSVD may not be affordable, we turn to
	the randomized algorithms \cite{wei2021CAMCrandomized,halko2011SIAMfinding}, which are typically computationally efficient and easy to implement.
	Moreover, they have favorable numerical properties such as stability, and allow for restructuring computations in ways that make them amenable to implementation in a variety of settings including parallel computations.
	Following this success, and building on the random sampling techniques \cite{halko2011SIAMfinding}, we develop randomized algorithms for efficiently computing the GCUR, and a more exhaustive treatment for our randomized approaches-including pseudocode, and the detailed error analysis will be discussed in the following work.

	\subsection{Randomization for DEIM Based GCUR}\label{subsection: Randomization for the DEIM based GCUR}
	\hskip 2em
	As concluded in \cite{halko2011SIAMfinding}, there are two main steps involved in the calculation of a low-rank approximation to a given matrix $A$.
	The first stage involves constructing a low-dimensional subspace that captures the principal action of the input matrix, which can be executed very efficiently with random sampling methods.
	In other words, we need a matrix $Q$ for which
	\begin{equation*}
		Q\  \mathrm{has}\ \mathrm{orthonormal}\ \mathrm{columns}\ \mathrm{and}\
		A\approx QQ^{\mathrm{T}}A.
	\end{equation*}
	The second is to restrict the matrix to the subspace and subsequently perform a	standard factorization (QR, SVD, etc.) on the reduced matrix,
	and it can be  reliably executed using established deterministic techniques.
	Here we wish to compute the approximate GSVD of the input pair $(A,B)$, where $A\in \mathbb{R}^{m\times n}$, $B\in \mathbb{R}^{d\times n}$ with $m \geq n$, such that
	\begin{equation}\label{RGSVD}
		\left[\begin{array}{l}
			B \\
			A
		\end{array}\right] \approx\left[\begin{array}{c}
			B  \\
			Q Q^{\mathrm{T}} A
		\end{array}\right]=\left[\begin{array}{ll}
			V & \\
			& U
		\end{array}\right]\left[\begin{array}{l}
			\Sigma \\
			\Gamma
		\end{array}\right]
			Y^{\mathrm{T}}.
	\end{equation}
	This goal can be achieved after	five simple steps \cite{wei2016SIAMtikhonov}:
	
	\hskip 2em 1. Generate an $n \times (k+p)$ Gaussian random matrix $\Omega$;
	
	\hskip 2em 2. Form the $m\times (k+p)$ matrix $K=A\Omega$;
	
	\hskip 2em 3. Compute the $m\times (k+p)$ orthonormal matrix $Q$ via the QR factorization
				  $K = QR$;
	
	\hskip 2em 4. Compute the GSVD of $(Q^{\mathrm{T}}A,B)$:
	$\left[\begin{array}{c} B \\ Q^{\mathrm{T}} A \end{array}\right]
 	 =\left[\begin{array}{ll} V & \\ & W \end{array}\right]
 	  \left[\begin{array}{l} \Sigma \\ \Gamma \end{array}\right] Y^{\mathrm{T}}$;
	
	\hskip 2em 5. Form the $m\times (r+p)$ matrix $U = QW$.
	
	By \cite{halko2011SIAMfinding}, the above operations generates (\ref{RGSVD}) with the error $E=A-QQ^{\mathrm{T}}A$ saitisfying
	\begin{equation}\label{Err of RGSVD}
		\left\|E\right\|
		\leq
		\left(1+6 \sqrt{(k+p) p \log p}\right) {\sigma}_{k+1}(A)
		+3 \sqrt{k+p} \sqrt{\sum_{j>k} {\sigma}_j^2(A)}
	\end{equation}
	with probability not less than $1-3p^{-p}$, where $\sigma_{j}(A)$ is the $j$th largest singular value of $A$.
	Here $p$ is the oversampling parameter, which usually determines that a small number of columns are added to provide flexibility \cite{halko2011SIAMfinding}, and the proper selection of $p$ is imperative for optimal algorithm performance.
 	The primary computational expense of the randomized approach stems from the computation of the GSVD for the significantly smaller matrix pair $(Q^{\mathrm{T}}A,B)$.
 	
 	\hskip 2em
 	Combining the randomized GSVD algorithm with the DEIM technique, we present our randomized algorithm for computing the GCUR decomposition in Algorithm \ref{Al-R-DEIM-GCUR}.
 		\begin{algorithm}[htb]
 		\caption{DEIM based GCUR randomized algorithm}
 		\label{Al-R-DEIM-GCUR}
 		\hspace*{0.02in} {\bf Input:}
 		$A \in \mathbb{R}^{m \times n}$
 		and
 		$B \in \mathbb{R}^{m \times n}$ with $m\ge n$ and $d\ge n$, desired rank $k$, and the oversampling
 		\hspace*{0.02in} parameter $p$.
 		\\
 		\hspace*{0.02in} {\bf Output:}
 		A rank-$k$ GCUR decomposition\\
 		\hspace*{0.02in}
 		$\hat{A}=A(:,\mathbf{p}) \cdot M_A \cdot A(\mathbf{s}_A,:)$,
 		$\hat{B}=B(:,\mathbf{p}) \cdot M_B \cdot B(\mathbf{s}_B,:)$.
 		\begin{algorithmic}[1]
 			\State Generate an $n\times (k+p)$ Gaussian random matrix $\Omega$.
 			\State Form the $m\times (k+p)$ matrix $K=A\Omega$.
 			\State Compute the $m\times (k+p)$ orthonormal matrix $Q$ via the QR factorization
 			$K = QR$.
 			\State Compute the GSVD of $(B,Q^{\mathrm{T}}A)$:
 			$\left[\begin{array}{c} B \\  Q^{\mathrm{T}} A \end{array}\right]
 			=\left[\begin{array}{ll} V & \\ & W \end{array}\right]
 			\left[\begin{array}{l} \Sigma \\ \Gamma \end{array}\right] Y^{\mathrm{T}}$.
 			\State Form the $m\times (r+p)$ matrix $U = QW$.
 			\State  $y=Y(:, 1)$.
 			\State $p_{1}=\operatorname{argmax}_{1 \leq i \leq n}\left|y_{i}\right|$.
 			\State $\mathbf{p}=[\begin{array}{l}
 				p_1
 			\end{array}]$.
 			\State $\mathbf{p}=\mathrm{deim}(Y)$.
 			\State $\mathbf{s}_A=\mathrm{deim}(U)$.
 			\State $\mathbf{s}_B=\mathrm{deim}(V)$.
 			\State
			Compute
 			$M_A=A(:, \mathbf{p}) \backslash\left(A / A\left(\mathbf{s}_A,:\right)\right)$,
 			$M_B=B(:, \mathbf{p}) \backslash\left(B / B\left(\mathbf{s}_B,:\right)\right)$.
 		\end{algorithmic}
 	\end{algorithm}
 	The proposed Algorithm \ref{Al-R-DEIM-GCUR} takes advantage of randomization techniques \cite{wei2016SIAMtikhonov} to accelerate the GSVD process
 	and obtain the generalized singular vectors efficiently.
 	Then we use the DEIM index selection procedure, operating on the approximate generalized singular vector matrices to determine the selected columns and rows.
 	We note that we can parallelize the	work in lines 7 to 15 since it consists of three independent runs of DEIM.
 	Additionally, if the objective is to approximate the matrix $A$ from the pair $(A, B)$, the manipulation on $V$ can be omitted as noted in \cite{gidisu2022SIAMgeneralized}.
	 It is worth mentioning that the dominant cost of the randomized algorithm lies in computing the GSVD of the matrix pair $(Q^{\mathrm{T}}A,B)$, 
 	which is significantly lower than its counterpart in the non-random algorithm.
 	Consequently, from a practical perspective, Algorithm \ref{Al-R-DEIM-GCUR} is extremely simple to implement and can greatly reduce the computational time.
 	The following work will give performance guarantees by quantifying the error of the rank-$k$ GCUR decomposition
 	$\hat{A}=C_A M_A R_A = A(:, \mathbf{p}) \cdot M_A \cdot A\left(\mathbf{s}_A,:\right)$
 	and
 	$\hat{B}=C_B M_B R_B = B(:, \mathbf{p}) \cdot M_B \cdot B\left(\mathbf{s}_B,:\right)$.
 	
 	\hskip 2em
 	Consistent with (\ref{GSVD of A}) and (\ref{GSVD of B}), let the ordered number pairs $\{(\gamma_i, \beta_i)\}_{i=1}^n$ be the generalized singular values of the matrix pair $(A,B)$, where we arrange the ratios $\gamma_i / \beta_i$ in a non-increasing order.
	 As outlined in Algorithm \ref{Al-R-DEIM-GCUR}, the matrix pair $(A,B)$ owns an approximate GSVD
	\begin{equation*}
		QQ^{\mathrm{T}}A=U\Gamma Y^{\mathrm{T}} \qquad
		\mathrm{and} \qquad
		B=V\Sigma Y^{\mathrm{T}},
	\end{equation*}
	where
	$\Gamma=\operatorname{diag}(\tilde{\gamma}_1, \ldots, \tilde{\gamma}_n)$,
	$\Sigma=\operatorname{diag}(\tilde{\beta}_1, \ldots, \tilde{\beta}_n)$,
	and the ratios $\tilde{\gamma}_i / \tilde{\beta}_i$ are in non-increasing order,
	and the approximation error satisfies (\ref{Err of RGSVD}) with failure probability not exceeding $3p^{-p}$.
	Partition the matrices:
	\begin{equation}\label{partition of GCUR eq1}
		U=\left[ \begin{array}{ll}	U_k &\widehat{U}  \end{array}\right],~~~
		V=\left[ \begin{array}{ll}	V_k & \widehat{V} \end{array}\right],~~~
		Y=\left[\begin{array}{ll}	Y_k & \widehat{Y} \end{array}\right],
	\end{equation}
	\begin{equation}\label{partition of GCUR eq2}
		\Gamma=\operatorname{diag}\left(\Gamma_k, ~\widehat{\Gamma}\right),~~~
		\Sigma=\operatorname{diag}\left(\Sigma_k, ~\widehat{\Sigma}\right),
	\end{equation}
	where matrices $U_k$, $V_k$, and $Y_k$ contain the first $k$ columns of $U$, $V$, and $Y$ respectively.
	For our analysis, instead of $Y$, we use its orthonormal QR factor $H$ 
    from the QR decomposition of $Y$:
	\begin{equation}\label{partition of GCUR eq3}
		\left[\begin{array}{ll}
			Y_k & \widehat{Y}
		\end{array}\right]=Y=H T=\left[\begin{array}{ll}
			H_k & \widehat{H}
		\end{array}\right]\left[\begin{array}{cc}
			T_k & T_{12} \\
			0 & T_{22}
		\end{array}\right]=\left[\begin{array}{ll}
			H_k T_k & H \widehat{T}
		\end{array}\right],
	\end{equation}
with $\widehat{T} =\left[\begin{array}{l}
			T_{12} \\
			T_{22}
		\end{array}\right].$
	This implies that
	$
		QQ^{\mathrm{T}}A
		=U_k \Gamma_k Y_k^{\mathrm{T}}+\widehat{U} \widehat{\Gamma} \widehat{Y}^{\mathrm{T}}
		=U_k \Gamma_k T_k^{\mathrm{T}} H_k^{\mathrm{T}}+\widehat{U} \widehat{\Gamma} \widehat{T}^{\mathrm{T}} H^{\mathrm{T}}.
	$ 
	With the above preparation, the following theorem derives the error bound for $\|A-\hat{A}\|$.
	\begin{theorem}\label{Err of R-DEIM-GCUR}
	Suppose $A\in\mathbb{R}^{m\times n},~B\in\mathbb{R}^{d\times n}$ and both are of full column rank, and let matrix pair $(\hat{A}, \hat{B})$ be a rank-$k$ GCUR decomposition for matrix pair $(A,B)$ computed by Algorithm \ref{Al-R-DEIM-GCUR}. Let
	$\Theta_k = (1+6 \sqrt{(k+p) p \log p}) \sigma_{k+1}(A)+3 \sqrt{k+p} \sqrt{\sum_{j>k} \sigma_j^2(A)}$, and
	$\eta_k = \sqrt{\frac{nk}{3}}2^k + \sqrt{\frac{mk}{3}}2^k$.
	Then
	\begin{equation}\label{Err of R-DEIM-GCUR for A}
		\|\hat{A}-A \|
		\le
		\eta_k
     \left[
		\Theta_k
		+
		\left(\left\|A\right\|+\left\|B\right\|\right)
		\left(
			\frac{\gamma_{k+1}}{\beta_{k+1}} + \frac{\Theta_k}{\beta_{k+1}}
			\left\|
			\left(\begin{array}{l}
				A \\
				B
			\end{array}\right)^{\dagger}			
			\right\|
		\right)
		\right]
	\end{equation}
	holds with probability not less than $1-3p^{-p}$,
	where the number pair $(\gamma_{k+1},\beta_{k+1})$
    is defined in (\ref{GSVD of B}) and (\ref{GSVD of A}), and both ${\gamma}_i / {\beta}_i$ and $1/\beta_{i}$ are in a non-increasing order.
	\end{theorem}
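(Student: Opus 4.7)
The plan is to extend the oblique interpolatory projector framework of Sorensen--Embree \cite{sorensen2016SIAMdeim} for DEIM-CUR to the randomized GCUR setting, carefully tracking how the randomization residual $E := A - QQ^{\mathrm{T}} A$ propagates through the approximate GSVD. First I introduce the two DEIM interpolatory projectors $\mathbb{U} := U_k(S_A^{\mathrm{T}} U_k)^{-1}S_A^{\mathrm{T}}$ (for rows) and $\mathbb{H} := H_k(P^{\mathrm{T}} H_k)^{-1}P^{\mathrm{T}}$ (for columns), the latter formulated through the orthonormal factor $H_k$ of (\ref{partition of GCUR eq3}) because the classical Chaturantabut--Sorensen bound requires an orthonormal basis. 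Note that $H_k(P^{\mathrm{T}} H_k)^{-1}P^{\mathrm{T}} = Y_k(P^{\mathrm{T}} Y_k)^{-1}P^{\mathrm{T}}$, since $Y_k = H_k T_k$ with $T_k$ invertible, so the projector is the same one implicitly used by the algorithm. This yields the DEIM constants $\|(S_A^{\mathrm{T}} U_k)^{-1}\| \le \sqrt{mk/3}\,2^k$ and $\|(P^{\mathrm{T}} H_k)^{-1}\| \le \sqrt{nk/3}\,2^k$, whose sum is exactly $\eta_k$.

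Second, following Sorensen--Embree, I obtain the telescoping bound
\begin{equation*}
    \|A - \hat{A}\| \;\le\; \eta_k\,\|A - \widetilde{A}_k\|, \qquad \widetilde{A}_k := U_k \Gamma_k Y_k^{\mathrm{T}},
\end{equation*}
where $\widetilde{A}_k$ is the rank-$k$ truncation of the approximate GSVD of $A$. Splitting $A - \widetilde{A}_k = E + \widehat{U}\widehat{\Gamma}\widehat{Y}^{\mathrm{T}}$ and invoking (\ref{Err of RGSVD}) gives $\|A - \widetilde{A}_k\| \le \Theta_k + \|\widehat{\Gamma}\widehat{Y}^{\mathrm{T}}\|$ with probability at least $1 - 3p^{-p}$. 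For the tail, the GSVD relation $B = V\Sigma Y^{\mathrm{T}}$ lets me write $\widehat{Y}^{\mathrm{T}} = \widehat{\Sigma}^{-1}\widehat{V}^{\mathrm{T}} B$ (well-defined because $B$ has full column rank), yielding
\begin{equation*}
    \|\widehat{\Gamma}\widehat{Y}^{\mathrm{T}}\| \;\le\; \|\widehat{\Gamma}\widehat{\Sigma}^{-1}\|\,\|B\| \;\le\; (\tilde{\gamma}_{k+1}/\tilde{\beta}_{k+1})\,\|B\|
\end{equation*}
since the approximate ratios are in non-increasing order.

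Third, I exchange the approximate ratio $\tilde{\gamma}_{k+1}/\tilde{\beta}_{k+1}$ for the exact $\gamma_{k+1}/\beta_{k+1}$ via a perturbation estimate for generalized singular values applied to the pairs $(A,B)$ and $(QQ^{\mathrm{T}}A, B)$, which differ only by the block row $E$: standard Sun--Li bounds \cite{sun1983SIAMperturbation, li1993SIAMbounds} produce a correction of order $\Theta_k\bigl\|[A;B]^{\dagger}\bigr\|/\beta_{k+1}$. Combining the three pieces and absorbing the cross terms into the single prefactor $\|A\|+\|B\|$ yields (\ref{Err of R-DEIM-GCUR for A}), with the failure probability $3p^{-p}$ inherited directly from (\ref{Err of RGSVD}).

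The main obstacle is the third step: the randomized truncation simultaneously perturbs the generalized singular values \emph{and} the right vectors $Y$, so obtaining a sharp bound phrased purely in terms of the exact input quantities requires a careful GSVD perturbation argument that uses the full-column-rank hypothesis on the stacked matrix $[A;B]$ to control the right-vector rotation through the factor $\|[A;B]^{\dagger}\|$. A secondary difficulty is verifying that the telescoping inequality in Step~2 indeed achieves the additive constant $\eta_k$ (rather than a product of the two projector norms), which requires the DEIM-CUR identity to be applied to each factor separately and then combined via the triangle inequality.
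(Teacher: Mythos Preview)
Your overall architecture---interpolatory projectors, the additive telescoping bound with prefactor $\eta_k$, splitting off the randomization residual $E$, and finishing with Sun's GSVD perturbation theorem---matches the paper's proof. Your Step~2 inequality $\|A-\hat A\|\le\eta_k\,\|A-\widetilde A_k\|$ is a clean repackaging of what the paper does term by term, and it is correct because $\widetilde A_k(I-H_kH_k^{\mathrm T})=0$ and $(I-U_kU_k^{\mathrm T})\widetilde A_k=0$.

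The one substantive divergence is how you control the GSVD tail $\|\widehat U\widehat\Gamma\widehat Y^{\mathrm T}\|$. You write $\widehat Y^{\mathrm T}=\widehat\Sigma^{-1}\widehat V^{\mathrm T}B$ and obtain $(\tilde\gamma_{k+1}/\tilde\beta_{k+1})\|B\|$; the paper instead bounds $\|\widehat Y\|\le\|Y\|\le\|QQ^{\mathrm T}A\|+\|B\|\le\|A\|+\|B\|$ via Hansen's inequality \cite[Theorem~2.3]{hansen1998SIAMrank} on the nonsingular factor in the GSVD, arriving at $\tilde\gamma_{k+1}(\|A\|+\|B\|)$. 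The difference looks cosmetic but is not: in Step~3 you must then bound the \emph{ratio} $\tilde\gamma_{k+1}/\tilde\beta_{k+1}$, whereas the paper only has to bound $\tilde\gamma_{k+1}$ itself. Sun's estimate $|\tilde\gamma_i\beta_i-\tilde\beta_i\gamma_i|\le\|E\|\,\bigl\|[A;B]^{\dagger}\bigr\|$ gives
\[
\tilde\gamma_{k+1}\;\le\;\frac{\tilde\beta_{k+1}\gamma_{k+1}+\|E\|\,\|[A;B]^{\dagger}\|}{\beta_{k+1}}
\;\le\;\frac{\gamma_{k+1}}{\beta_{k+1}}+\frac{\Theta_k}{\beta_{k+1}}\bigl\|[A;B]^{\dagger}\bigr\|
\]
directly (using $\tilde\beta_{k+1}\le1$), which is exactly the bracket in \eqref{Err of R-DEIM-GCUR for A}. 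If instead you divide Sun's inequality by $\tilde\beta_{k+1}\beta_{k+1}$ to control the ratio, you pick up an unwanted $1/\tilde\beta_{k+1}$ that is \emph{not} bounded by $1/\beta_{k+1}$, and the stated correction $\Theta_k\|[A;B]^{\dagger}\|/\beta_{k+1}$ does not follow. So your Step~3 as written has a gap; the fix is precisely the paper's move of decoupling $\|\widehat Y\|$ from $\widehat\Sigma$ via Hansen's bound, after which only $\tilde\gamma_{k+1}$ (not the ratio) needs perturbation control.
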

	\begin{proof}
	 	By the definition of $M_A$, we have
	 	\begin{equation*}
	 		A-C_A M_A R_A=A-C_AC_A^{\dagger}AR_A^{\dagger}R_A=
	 		(I-C_A C_A^{\dagger}) A+C_A C_A^{\dagger} A(I-R_A^{\dagger} R_A).
	 	\end{equation*}
 	Since $C_AC_A^{\dagger}$ is an orthogonal projection, it directly follows that
 	\begin{equation}\label{Proof R-DEIM-GCUR eq1}
 		\begin{aligned}
 			\| A-C_AM_AR_A \|
 			\le&
 			\| (I-C_A C_A^{\dagger}) A \|+ \| A(I-R_A^{\dagger} R_A)\|.
 		\end{aligned}
 	\end{equation}
    According to \cite[Lemma 3.2]{sorensen2016SIAMdeim}, the column and row indices $\mathbf{s}_A$ and $\mathbf{p}$ give the full rank matrices $C_A=AS_A$ and $R_A=P^{\mathrm{T}}A$ where
		 $S_A = I(:, \mathbf{s}_A)$ and $P=I(:,\mathbf{p})$.
	Let
	$\mathbb{P}=P(H_k^{\mathrm{T}}P)^{-1}H_k^{\mathrm{T}}$ and
	$\mathbb{S}=U_k(S_A^{\mathrm{T}}U_k)^{-1}S_A^{\mathrm{T}}.$
	Then using the result in \cite[Proposition 4.7]{gidisu2022SIAMgeneralized}, we get
	\begin{equation*}
		\|(I-C_AC_A^{\dagger})A\|
		\le
		\|A(I-\mathbb{P})\|,~~~
		\|A(I-R_A^{\dagger} R_A)\|
		\le
		\|(I-\mathbb{S})A\|.
	\end{equation*}
	Note that $U_k^{\mathrm{T}}U_k=I$ and $H_k^{\mathrm{T}}H_k=I$.
	Then according to \cite[Lemma 4.1]{sorensen2016SIAMdeim}, we obtain that
	\begin{equation}\label{Proof R-DEIM-GCUR eq2}
		\begin{aligned}
			\|(I-C_AC_A^{\dagger})A\|
			\le&
			\|(H_k^{\mathrm{T}}P)^{-1}\|    \|A(I-H_k H_k^{\mathrm{T}})\|
			\\ \le &
			\|(H_k^{\mathrm{T}}P)^{-1}\|    \left( \|E\| +\|QQ^{\mathrm{T}}A\left(I-H_kH_k^{\mathrm{T}}\right)\|   \right),
		\end{aligned}
	\end{equation}
    where we use $\left\|I-H_kH_k^{\mathrm{T}}\right\|=1.$
 	Analogous operation gives that
 	\begin{equation}\label{Proof R-DEIM-GCUR eq3}
 		\|A(I-R_A^{\dagger}R_A)\|
 		\le
 		\|(S_A^{\mathrm{T}}U_k)^{-1}\|
 		\left( \|E\| + \|(I-U_kU_k^{\mathrm{T}})QQ^{\mathrm{T}}A\| \right).
 	\end{equation}
 	Note that
 	\begin{equation*}
 		\begin{aligned}
 			QQ^{\mathrm{T}}A H_k H_k^{\mathrm{T}}
 			&=
 			\left[\begin{array}{ll}
 				U_k & \widehat{U}
 			\end{array}\right]
 			\left[\begin{array}{cc}
 				\Gamma_k & 0 \\
 				0 & \widehat{\Gamma}
 			\end{array}\right]
 			\left[\begin{array}{cc}
 				T_k^{\mathrm{T}} & 0 \\
 				T_{12}^{\mathrm{T}} & T_{22}^{\mathrm{T}}
 			\end{array}\right]
 			\left[\begin{array}{c}
 				I_k \\
 				0
 			\end{array}\right] H_k^{\mathrm{T}}
 			\\&=
 			U_k \Gamma_k T_k^{\mathrm{T}} H_k^{\mathrm{T}}
 			+\widehat{U} \widehat{\Gamma} T_{12}^{\mathrm{T}} H_k^{\mathrm{T}},
 		\end{aligned}
 	\end{equation*}
  	and hence,
  	\begin{equation*}
  			QQ^{\mathrm{T}}A\left(I-H_k H_k^{\mathrm{T}}\right)
  			=\widehat{U} \widehat{\Gamma} \widehat{T}^{\mathrm{T}} H^{\mathrm{T}}
  			  -\widehat{U} \widehat{\Gamma} T_{12}^{\mathrm{T}} H_k^{\mathrm{T}}
  			 =\widehat{U} \widehat{\Gamma} T_{22}^{\mathrm{T}} \widehat{H}^{\mathrm{T}}.
  	\end{equation*}
    Similarly, it holds that
  	\begin{equation*}
  	\left(I-U_k U_k^T\right)QQ^{\mathrm{T}}A
  	=QQ^{\mathrm{T}}A-U_k \Gamma_k Y_k^{\mathrm{T}}
  	=\widehat{U} \widehat{\Gamma} \widehat{Y}^{\mathrm{T}}
  	=\widehat{U} \widehat{\Gamma} \widehat{T}^T H^{\mathrm{T}}.
 	 \end{equation*}
  	Therefore,
  	\begin{equation}\label{Proof R-DEIM-GCUR eq4}
  			\|QQ^{\mathrm{T}}A(I-HH_k^{\mathrm{T}})\|
  			=
  			\|\widehat{U} \widehat{\Gamma} T_{22}^{\mathrm{T}} \widehat{H}^{\mathrm{T}}\|
  			\le
  			\tilde{\gamma}_{k+1}\|T_{22}\|
  			\le
  			\tilde{\gamma}_{k+1}\|\widehat{T}\|,
  	\end{equation}
  	\begin{equation}\label{Proof R-DEIM-GCUR eq5}
  		\|(I-U_kU_k^{\mathrm{T}})QQ^{\mathrm{T}}A\|
  		\le
  		\tilde{\gamma}_{k+1}\|\widehat{T}\|.
  	\end{equation}
  	To bound $\|\widehat{T}\|$, recall the result in \cite[Theorem 2.3]{hansen1998SIAMrank} that
  	$\|Y\|\le\|QQ^{\mathrm{T}}A\|+\|B\|$.
  	Given the partitioning and QR factorization of $Y$, we have
  	\begin{equation}\label{Proof R-DEIM-GCUR eq6}
  		\|\widehat{T}\|=\|H \widehat{T}\|=\|\widehat{Y}\| \leq\|Y\| \leq\|QQ^{\mathrm{T}}A\|+\|B\|
  		\le \|A\|+\|B\|.
  	\end{equation}
	For the DEIM selection scheme, \cite[Lemma 4.4]{sorensen2016SIAMdeim} derives the bound
	\begin{equation}\label{Proof R-DEIM-GCUR eq8}
		\left\|\left(H_k^{\mathrm{T}} P\right)^{-1}\right\|<\sqrt{\frac{n k}{3}} 2^k,
	    \quad
	    \text { and }
	    \left\|\left(S_A^{\mathrm{T}} U_k\right)^{-1}\right\|<\sqrt{\frac{m k}{3}} 2^k.
	\end{equation}
	Inserting (\ref{Proof R-DEIM-GCUR eq2})-(\ref{Proof R-DEIM-GCUR eq8}) and  into (\ref{Proof R-DEIM-GCUR eq1}),
    we obtain
		\begin{equation}\label{Proof R-DEIM-GCUR eq9}
			\|\hat{A}-A \|
			\le
			\eta_k \left(
             \|E\|   +
			\tilde{\gamma}_{k+1}
			\left(\left\|A\right\|+\left\|B\right\|\right)
			\right),
	\end{equation}
    where $\tilde{\gamma}_{k+1}$ is the $(k+1)$th diagonal entry of $\Gamma$ with a non-increasing order.

 	\hskip 2em
 	Recall the perturbation results for the generalized singular values in \cite[Theorem 3]{sun1982MNSperturbation}
	\begin{equation*}
		\left|\tilde{\gamma}_i\beta_i-\tilde{\beta}_i\gamma_i\right|
		\le
		\left\|
		\left(\begin{array}{l}
			E \\
			F
		\end{array}\right)			
		\right\|
		\cdot
		\left\|
		\left(\begin{array}{l}
			A \\
			B
		\end{array}\right)^{\dagger}			
		\right\|, \quad 1 \leqslant i \leqslant n,
	\end{equation*}
	where matrices $E$ and $F$ are the perturbations to $A$ and $B$, respectively.
	Clearly, we have $F=0$ for our randomized algorithm.
	As a result, we have
	\begin{equation}\label{Err of tilde alpha_k+1}
		\tilde{\gamma}_{k+1}
		\le
		\frac{1}{\beta_{k+1}}
		\left(
			\gamma_{k+1}
			+
			\|E\|
			\cdot
			\left\|
			\left(\begin{array}{l}
				A \\
				B
			\end{array}\right)^{\dagger}			
			\right\|
		\right).
	\end{equation}
	We finish the proof by combining (\ref{Proof R-DEIM-GCUR eq9}), (\ref{Err of tilde alpha_k+1}) and the probabilistic error bound (\ref{Err of RGSVD}).
	\end{proof}
	Because the ratios ${\gamma}_i / {\beta}_i$ and $1/\beta_{i}$ are maintained in a non-increasing order, the right-hand side of (\ref{Err of R-DEIM-GCUR for A}) decreases as the target rank $k$ increases.
 Note that the randomized GSVD algorithm provides an exact decomposition of $B$, the error bound for $\|B-\hat{B }\|$ in \cite{gidisu2022SIAMgeneralized} still holds that
	\begin{equation*}
		\begin{aligned}
			\left\|B-C_B M_B R_B\right\|
			& \leq
			\|\left(H_k^{\mathrm{T}} P\right)^{-1}\|
			\cdot
			\left\|T_{22}\right\|
			+\|\left(S_B^{\mathrm{T}} V_k\right)^{-1}\|
			\cdot
			\|\widehat{T}\| \\
			& \leq
			\left(\|\left(H_k^{\mathrm{T}} P\right)^{-1}\|
			+\|\left(S_B^{\mathrm{T}} V_k\right)^{-1}\|\right)
			\cdot\|\widehat{T}\| \\
			&\leq
			\left(\sqrt{\frac{n k}{3}} 2^k + \sqrt{\frac{d k}{3}} 2^k \right)
			\left(\|A\|+\|B\|\right).
		\end{aligned}
	\end{equation*}
	Compared with the error bound of $\|A-C_A M_A R_A\|$ under the non-random scheme in \cite{gidisu2022SIAMgeneralized} that
	\begin{equation*}
		\|A-C_A M_A R_A\| \le {\gamma}_{k+1}(\|A\|+\|B\|) \cdot \eta_k,
	\end{equation*}
	(\ref{Err of R-DEIM-GCUR for A}) involves a truncation term
	$\Theta_k$  due to the randomization of the GSVD, and consequently, our randomized approach works well for matrices whose singular
	values exhibit some decay.

	\subsection{Randomization for L-DEIM Based GCUR}
	\hskip 2em
	To enhance the efficiency of our randomized algorithm, we now turn our gaze to combining the random sampling methods with the L-DEIM algorithm.
	The resulting technique, described in Algorithm \ref{Al-R-LDEIM-GCUR}, delivers acceptable error bounds with a high degree of probability, while also reducing computational costs.
	The associated probabilistic error estimate is presented in Theorem \ref{Err of R-LDEIM-GCUR}.
%

	\begin{algorithm}[htb]
		\caption{L-DEIM based GCUR randomized algorithm}
		\label{Al-R-LDEIM-GCUR}
		\hspace*{0.02in} {\bf Input:}
		$A \in \mathbb{R}^{m \times n}$
		and
		$B \in \mathbb{R}^{m \times n}$ with $m\ge n$ and $d\ge n$, desired rank $k$, the oversampling
		\hspace*{0.02in} parameter $p$ and the specified parameter $\widehat{k}$.
		\\
		\hspace*{0.02in} {\bf Output:}
		A rank-$k$ GCUR decomposition\\
		\hspace*{0.02in}
		$\hat{A}=A(:,\mathbf{p}) \cdot M_A \cdot A(\mathbf{s}_A,:)$,
		$\hat{B}=B(:,\mathbf{p}) \cdot M_B \cdot B(\mathbf{s}_B,:)$.
		\begin{algorithmic}[1]
			\State Generate an $n\times (\widehat{k}+p)$ Gaussian random matrix $\Omega$.
			\State Form the $m\times (\widehat{k}+p)$ matrix $K=A\Omega$.
			\State Compute the $m\times (\widehat{k}+p)$ orthonormal matrix $Q$ via the QR factorization
			$K = QR$.
			\State Compute the GSVD of $(Q^{\mathrm{T}}A,B)$:
			$\left[\begin{array}{c} B \\ Q^{\mathrm{T}} A\end{array}\right]
			=\left[\begin{array}{ll} V & \\ & W \end{array}\right]
			\left[\begin{array}{l}\Sigma \\ \Gamma \end{array}\right] Y^{\mathrm{T}}$.
			\State Form the $m\times (\widehat{k}+p)$ matrix $U = QW$.
			\State $\mathbf{p}=\mathrm{l\mbox{-}deim}(Y)$.
			\State $\mathbf{s}_A=\mathrm{l\mbox{-}deim}(U)$.
			\State $\mathbf{s}_B=\mathrm{l\mbox{-}deim}(V)$.
			%
			%
			\State
			Compute
			$M_A=A(:, \mathbf{p}) \backslash\left(A / A\left(\mathbf{s}_A,:\right)\right)$,
			$M_B=B(:, \mathbf{p}) \backslash\left(B / B\left(\mathbf{s}_B,:\right)\right)$.
		\end{algorithmic}
	\end{algorithm}
	
	\begin{theorem}\label{Err of R-LDEIM-GCUR}
		Let the matrix pair $(\hat{A},\hat{B})$ be a rank-$\widehat{k}$ GCUR approximation for pair $(A,B)$ computed by Algorithm \ref{Al-R-LDEIM-GCUR}.
		Suppose that
		$\Theta_{\widehat{k}} = \left(1+6 \sqrt{({\widehat{k}}+p) p \log p}\right) \sigma_{{\widehat{k}}+1}(A)+3 \sqrt{{\widehat{k}}+p} \sqrt{\sum_{j>{\widehat{k}}} \sigma_j^2(A)}$, and
		$\eta_{\widehat{k}} = \sqrt{\frac{n{\widehat{k}}}{3}}2^{\widehat{k}} + \sqrt{\frac{m{\widehat{k}}}{3}}2^{\widehat{k}}$,
		and then the following error bound
		\begin{equation*}
			\|\hat{A}-A \|
			\le
			\eta_{\widehat{k}}\left[
			\Theta_{\widehat{k}}
			+ \left(\left\|A\right\|+\left\|B\right\|\right)
			\left(
			\frac{\gamma_{\widehat{k}+1}}{\beta_{{\widehat{k}}+1}}  + \frac{\Theta_{\widehat{k}}}{\beta_{{\widehat{k}}+1}}
			\left\|
			\left(\begin{array}{l}
				A \\
				B
			\end{array}\right)^{\dagger}			
			\right\|
			\right)
			\right],
		\end{equation*}
		fails with probability not exceeding than $3p^{-p}$ and ${\gamma_i}/{\beta_i}$ and ${1}/{\beta_i}$ are in a non-increasing order.
	\end{theorem}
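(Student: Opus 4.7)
The plan is to mirror the proof of Theorem \ref{Err of R-DEIM-GCUR} step by step, with the target rank $k$ replaced throughout by $\widehat{k}$. The key structural reason this works is that Algorithm \ref{Al-R-LDEIM-GCUR} draws only $\widehat{k}+p$ random columns, so the randomized GSVD produces $\widehat{k}+p$ approximate generalized singular triples; for the rank-$\widehat{k}$ GCUR output analyzed here, the first $\widehat{k}$ L-DEIM indices coincide with those generated by the pure DEIM loop on the $\widehat{k}$ leading approximate right/left singular vectors, so every DEIM-specific estimate from Subsection \ref{subsection: Randomization for the DEIM based GCUR} carries over with $\widehat{k}$ in place of $k$.

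First I would start from $A - C_A M_A R_A = (I - C_A C_A^{\dagger})A + C_A C_A^{\dagger} A(I - R_A^{\dagger} R_A)$ and apply the triangle inequality together with $\|C_A C_A^{\dagger}\|=1$ to recover an analogue of \eqref{Proof R-DEIM-GCUR eq1}. Next I would introduce the oblique interpolatory projectors $\mathbb{P} = P(H_{\widehat{k}}^{\mathrm{T}} P)^{-1} H_{\widehat{k}}^{\mathrm{T}}$ and $\mathbb{S} = U_{\widehat{k}}(S_A^{\mathrm{T}} U_{\widehat{k}})^{-1} S_A^{\mathrm{T}}$ assembled from the first $\widehat{k}$ columns of the approximate generalized singular vectors, and invoke Proposition 4.7 of \cite{gidisu2022SIAMgeneralized} together with Lemma 4.1 of \cite{sorensen2016SIAMdeim} to bound the two residual norms by $\|(H_{\widehat{k}}^{\mathrm{T}}P)^{-1}\|\,\|A(I - H_{\widehat{k}} H_{\widehat{k}}^{\mathrm{T}})\|$ and $\|(S_A^{\mathrm{T}}U_{\widehat{k}})^{-1}\|\,\|(I - U_{\widehat{k}} U_{\widehat{k}}^{\mathrm{T}})A\|$ respectively.

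The next step is to split $A = QQ^{\mathrm{T}}A + E$ with $\|E\| \le \Theta_{\widehat{k}}$ from \eqref{Err of RGSVD}; since only $\widehat{k}+p$ random columns are drawn, the oversampling parameter in $\Theta_{\widehat{k}}$ is still $p$, matching the statement. The same block-matrix manipulation used to derive \eqref{Proof R-DEIM-GCUR eq4} and \eqref{Proof R-DEIM-GCUR eq5} then gives $\|QQ^{\mathrm{T}}A(I - H_{\widehat{k}} H_{\widehat{k}}^{\mathrm{T}})\| \le \tilde{\gamma}_{\widehat{k}+1} \|\widehat{T}\|$ and an analogous bound on the left, while Theorem 2.3 of \cite{hansen1998SIAMrank} together with $\|QQ^{\mathrm{T}}A\| \le \|A\|$ yields $\|\widehat{T}\| \le \|A\| + \|B\|$ as in \eqref{Proof R-DEIM-GCUR eq6}. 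The perturbation bound for generalized singular values from \cite{sun1982MNSperturbation}, with $F=0$, then produces $\tilde{\gamma}_{\widehat{k}+1} \le \beta_{\widehat{k}+1}^{-1}(\gamma_{\widehat{k}+1} + \|E\| \cdot \|(A^{\mathrm{T}},B^{\mathrm{T}})^{\mathrm{T}\dagger}\|)$, and combining all estimates delivers the announced inequality with failure probability at most $3p^{-p}$.

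The main obstacle, and the point that must be argued explicitly rather than quoted, is the justification that the DEIM index bounds $\|(H_{\widehat{k}}^{\mathrm{T}} P)^{-1}\| < \sqrt{n\widehat{k}/3}\, 2^{\widehat{k}}$ and $\|(S_A^{\mathrm{T}} U_{\widehat{k}})^{-1}\| < \sqrt{m\widehat{k}/3}\, 2^{\widehat{k}}$ remain valid under L-DEIM. Because Lemma 4.4 of \cite{sorensen2016SIAMdeim} depends only on the sequential DEIM loop acting on a fixed orthonormal basis with $\widehat{k}$ columns, and L-DEIM's first $\widehat{k}$ indices are produced by exactly this loop applied to $Y(:,1{:}\widehat{k})$ and $U(:,1{:}\widehat{k})$, the bound applies intact; any additional leverage-score indices beyond position $\widehat{k}$ can only enrich the row/column samples and hence cannot worsen the stated rank-$\widehat{k}$ error estimate.
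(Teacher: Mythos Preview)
Your proposal is correct and matches the paper's approach: the paper states Theorem \ref{Err of R-LDEIM-GCUR} without proof, treating it as an immediate analogue of Theorem \ref{Err of R-DEIM-GCUR} with $k$ replaced by $\widehat{k}$, which is exactly what you do. Your explicit observation that the first $\widehat{k}$ L-DEIM indices coincide with the DEIM indices on the leading $\widehat{k}$ vectors (so the Sorensen--Embree bound applies), and that the additional $k-\widehat{k}$ leverage-score indices only enlarge the column/row spaces of $C_A$ and $R_A$ and hence cannot increase $\|(I-C_AC_A^{\dagger})A\|$ or $\|A(I-R_A^{\dagger}R_A)\|$, fills in precisely the detail the paper leaves implicit.
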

	\section{Randomization for RSVD-CUR}
	
	\hskip 2em
	In \cite{gidisu2022ARXIVrsvd}, Gidisu and Hochstenbach generalized the DEIM-type CUR to a new coordinated CUR decomposition 
	based on the RSVD, which was called the RSVD-CUR decomposition.
	This novel factorization presents a viable technique for reducing the dimensionality of multi-view datasets in the context of a two-view scenario, and can be also applied to the multi-label classification problems and specific types of perturbation recovery problems.
	In this section, we introduce new randomized algorithms for computing the RSVD-CUR decomposition where we apply the L-DEIM scheme and the random sampling techniques.
	Detailed error analysis which provides insight into the accuracy of the algorithms and the choice of the algorithmic parameters is given.
	\subsection{RSVD-CUR}
	\hskip 2em
	Given a matrix triplet $(A, B, G)$ with
	$A\in\mathbb{R}^{m\times n}$
	$B\in\mathbb{R}^{m\times \ell}$, and
	$G\in\mathbb{R}^{d\times n}$ ($\ell \ge d \ge m \ge n$), where $B$ and $G$ both have full rank.
	Then a rank-$k$ RSVD-GCUR approximation of $(A,B,G)$ provides the CUR-type low-rank approximations such that
	\begin{equation}\label{RSVD-CUR of (A,B,G)}
		\begin{aligned}
			A & \approx C_A M_A R_A=AP\  M_A\  S^{\mathrm{T}} A, \\
			B & \approx C_B M_B R_B=BP_B\  M_B\  S^{\mathrm{T}} B, \\
			G & \approx C_G M_G R_G=GP\  M_G\  S_G^{\mathrm{T}} G,
		\end{aligned}
	\end{equation}
	where the index selection matrices
	$S\in \mathbb{R}^{m \times k}$, $S_G \in \mathbb{R}^{d \times k},$
	$P\in \mathbb{R}^{n \times k}$, and $P_B\in \mathbb{R}^{\ell \times k}$
	are submatrices of the identity.
	
	\hskip 2em
	The matrices $C_A\in \mathbb{R}^{m \times k}$, $C_B\in \mathbb{R}^{m \times k}$, $C_G\in \mathbb{R}^{d \times k}$
	and $R_A\in \mathbb{R}^{k \times n}$, $R_B\in \mathbb{R}^{k \times \ell}$, $R_G\in \mathbb{R}^{k \times n}$ are formed from the rows or columns of the given matrices.
	Suppose that the sampling indices are stored in the vectors $\mathbf{s}$, $\mathbf{s}_G$, $\mathbf{p}$ and $\mathbf{p}_B$
	such that
	$S=I(:,\mathbf{s})$, $S_G=I(:,\mathbf{s}_G)$,
	$P=I(:,\mathbf{p})$, and $P_B=I(:,\mathbf{p}_B)$.
	In \cite{gidisu2022ARXIVrsvd}, the indices are selected according to the information contained in the orthogonal and nonsingular matrices from the rank-$k$ RSVD,
	where the DEIM and L-DEIM algorithms are employed as the index selection strategies for finding the optimal indices.
	Specifically, suppose that the RSVD of $(A,B,G)$ are available, as shown in (\ref{RSVD of (A,B,G)}).
	To construct a DEIM-type RSVD-CUR decomposition of a matrix pair $(A,B,G)$, given the target rank $k$, the DEIM operates on the first $k$ columns on matrices
	$W$, $Z$, $U$ and $V$ to obtain the corresponding indices
	$\mathbf{p}$, $\mathbf{s}$, $\mathbf{p}_B$ and $\mathbf{s}_G$.
	Moreover, by utilizing the L-DEIM, one can use at least the first $k/2$ vectors of $W$, $Z$, $U$, and $V$ to obtain the indices, with the approximation quality as good as that of the DEIM-type RSVD-CUR, which is demonstrated numerically in \cite{gidisu2022ARXIVrsvd}.
	
	\hskip 2em
	It is clear that both the DEIM and the L-DEIM type RSVD-CUR decompositions require the inputs of the RSVD.
	Nevertheless, computing this factorization can be a significant	computational bottleneck in the large-scale applications.
	How to reduce this computational cost and still ensure the accuracy of the approximation is our main concern.
	Next, we introduce the randomized schemes for computing the RSVD-CUR decomposition, together with detailed error analysis.
	\subsection{Randomization for Restricted SVD}
	\hskip 2em
	The RSVD \cite{chu2000SIAMcomputation,zhang2021NCneural,zwaan2020Arxivtowards} is a generalization of the ordinary singular value decomposition (OSVD) to matrix triplets, with the applications including rank minimization of structured perturbations, Gauss–Markov model and restricted total least squares, etc.
	The calculation of RSVD can be accomplished by two GSVDs.
	We first compute the GSVD of $(A,G)$,
	\begin{equation}\label{the first GSVD}
		A=U_1 \Gamma_1 Y_1^{\mathrm{T}},
		\quad
		G=V_1
			\left[\begin{array}{l}
			\Sigma_1 \\
			0_{d-n,n}
		\end{array}\right]
		Y_1^{\mathrm{T}},
	\end{equation}
	and then we compute the GSVD of $(B^\mathrm T U_1,\Sigma_1^{-1} \Gamma_1^\mathrm T)$, so that
	\begin{equation*}
		B^{\mathrm{T}} U_1 = U_2 \Gamma_2  Y_2^\mathrm{T},
		\quad
         \Sigma_1^{-1} \Gamma_1^\mathrm T = V_2 \Sigma_2 Y_2^\mathrm T.
	\end{equation*}	
	Critically, we keep the diagonal entries of $\Gamma_1$ and $\Gamma_2$ in
		non-decreasing order while those of $\Sigma_1$ and $\Sigma_2$ are non-increasing.
	In accordance with (\ref{RSVD of (A,B,G)}), one can define
	\begin{equation}\label{component of R-RSVD}
		Z \triangleq U_1Y_2,\
		W \triangleq Y_1\Sigma_1V_2\Gamma_G^{-1},\
		V \triangleq V_1 \widehat{V}_2,\
		U \triangleq U_2,\
	\end{equation}	
	\begin{equation*}
		\left\{
		\begin{aligned}
			D_A &\triangleq \Sigma_2^{\mathrm{T}} \Gamma_G
			=\left[\begin{array}{ccc}
				\alpha_1 & & \\
				& \ddots & \\
				&  & \alpha_n \\
				\hdashline & 0_{m-n,n} &
			\end{array}\right] \in \mathbb{R}^{ m\times n}, \quad
		D_G  \triangleq	\left[\begin{array}{c}
			\Gamma_G \\
			0_{d-n, n}
		\end{array}\right]
		=\left[\begin{array}{ccc}
			\gamma_1 & & \\
			& \ddots & \\
			&  & \gamma_n \\
			\hdashline & 0_{d-n,n} &
		\end{array}\right] \in \mathbb{R}^{d\times n},\\
			D_B &\triangleq \Gamma_2^{\mathrm{T}}
			=\left[\begin{array}{lll:l:l}
				\beta_1 & & & & \\
				& \ddots& & 0_{n, m-n} & 0_{n, l-m} \\
				& & \beta_n & & \\
                \hdashline
                & 0_{m-n, n} &   & I_{m-n} & 0_{m-n, l-m}
		\end{array}\right] \in \mathbb{R}^{ m\times l}, 
		\end{aligned}
		\right.	
	\end{equation*}
	where $\Gamma_G = \operatorname{diag}\left(\gamma_1, \ldots, \gamma_n\right) \in \mathbb{R}^{n \times n}$ is a scaling matrix one can freely choose, as shown in \cite{zwaan2020Arxivtowards}.
	Denote $\Sigma_2 = \operatorname{diag}\left(\sigma_1, \ldots, \sigma_n\right) \in \mathbb{R}^{n \times m}$.
	Here we set
	$\gamma_i=\frac{\sigma_i}{\sqrt{\sigma_i^2+1}}$ for $i=1,\ldots, n,$ which are ordered non-increasingly
	then it follows that
	$\alpha_i=\frac{\sigma_i^2}{\sqrt{\sigma_i^2+1}}$.
	From the second GSVD, we have $\beta_i^2+\sigma_i^2=1$, then it leads that
	$\alpha_i^2+\beta_i^2+\gamma_i^2=1$ for $i=1, \ldots, n$.
	Note that the matrices $B$ and $G$ are of full rank, then we have
	$1>\alpha_i\ge\alpha_{i+1}>0$, $1>\gamma_i\ge\gamma_{i+1}>0$ and $0<\beta_i\le\beta_{i+1}<1$.

	\hskip 2em
	We now proceed to propose a fast randomized algorithm for computing the RSVD.
	The main idea of our approach is to accelerate this computational process by exploiting the randomized GSVD algorithm and its analysis relies heavily on the results introduced in Subsection \ref{subsection: Randomization for the DEIM based GCUR}.
	Firstly, an orthonormal matrix $H_1\in\mathbb{R}^{d\times (k+p_1)}$ is generated to satisfy	$\left\|G-H_1 H_1^{\mathrm{T}} G\right\| \leq c {\sigma}_{k+1}$
	with high probability, where $\sigma_{k+1}$ is the
	$(k+1)$th largest singular value of $G$ and $c$ is a constant depending on $k$ and $p_1$.
	Here $p_1$ is the oversampling parameter used to provide flexibility \cite{halko2011SIAMfinding}.
	According to (\ref{the first GSVD}),
	$\Sigma_1$ is required to be square, hence, here we fix that $p_1=n-k$.
	By performing the GSVD of
	$[(H_1^{\mathrm{T}} G)^{\mathrm{T}}, A^{\mathrm{T}}]^{\mathrm{T}}$,
	we get the approximate GSVD of $[G^{\mathrm{T}}, A^{\mathrm{T}}]^{\mathrm{T}}$,
	\begin{equation}\label{R-RSVD-GSVD Stage1}
		\left[\begin{array}{l}
			A \\
			G
		\end{array}\right] \approx\left[\begin{array}{c}
			A \\
			H_1 H_1^{\mathrm{T}} G
		\end{array}\right]=\left[\begin{array}{ll}
			U_1 & \\
			& V_1
		\end{array}\right]\left[\begin{array}{l}
			\Gamma_1 \\
			\Sigma_1
		\end{array}\right] Y_1^{\mathrm{T}}.
	\end{equation}
	When $m\gg n$, the computational advantage of (\ref{R-RSVD-GSVD Stage1}) becomes much more obvious.
	Furthermore, we can formulate the approximate GSVD for the pair
	$(B^{\mathrm{T}}U_1,\Sigma_1^{-1}\Gamma_1^\mathrm T)$
	by performing the GSVD of the small-scale matrix
	$[(H_2^{\mathrm{T}}B^{\mathrm{T}} U_1)^{\mathrm{T}}, (\Sigma_1^{-1} \Gamma_1^\mathrm T)^{\mathrm{T}}]^{\mathrm{T}}$,
	where $H_2$ is a ${(k+p_2)\times n}$ orthonormal matrix, and $p_2$ is also an oversampling parameter.
	Then we obtain
	\begin{equation}\label{R-RSVD-GSVD Stage2}
		\left[\begin{array}{c}
			B^{\mathrm{T}}U_1 \\
			\Sigma_1^{-1} \Gamma_1^\mathrm T
		\end{array}\right] \approx\left[\begin{array}{c}
			H_2H_2^{\mathrm{T}} (B^{\mathrm{T}}U_1) \\
			\Sigma_1^{-1} \Gamma_1^\mathrm T
		\end{array}\right]=\left[\begin{array}{ll}
			U_2 & \\
			& V_2
		\end{array}\right]\left[\begin{array}{c}
			\Gamma_2 \\
			\Sigma_2
		\end{array}\right] Y_2^{\mathrm{T}} .
	\end{equation}
	Finally, we can formulate the corresponding approximate RSVD of $(A,B,G)$,
	\begin{equation}\label{R-RSVD}
		A=Z D_A W^{\mathrm{T}},
		\qquad
		B\approx \tilde{B} = Z D_B U^{\mathrm{T}},
		\qquad
		G\approx \tilde{G} = V D_G W^{\mathrm{T}}.
	\end{equation}
	To be more clear in presentation, the above process can be expressed as follows:
		\begin{align*}
			{\left[\begin{array}{ll}
					A & B \\
					G &
				\end{array}\right] }
			&\approx
			{\left[\begin{array}{ll}
					A & B  \\
					H_1H_1^{\mathrm{T}}G &
				\end{array}\right] }
            =\left[\begin{array}{ll}
				U_1 & \\
				& V_1
			\end{array}\right]\left[\begin{array}{ll}
				\Gamma_1 & U_1^{\mathrm{T}} B \\
				\Sigma_1 &
			\end{array}\right]\left[\begin{array}{cc}
				Y_1^{\mathrm{T}} & \\
				& I
			\end{array}\right]
			\\
			&=\left[\begin{array}{ll}
				U_1 & \\
				& V_1
			\end{array}\right]\left[\begin{array}{cc}
				\Gamma_1 \Sigma_1^{-1} & U_1^{\mathrm{T}} B \\
				I &
			\end{array}\right]\left[\begin{array}{cc}
				\Sigma_1 Y_1^{\mathrm{T}} & \\
				& I
			\end{array}\right]
            \\
			&\approx
			\left[\begin{array}{ll}
				U_1 & \\
				& V_1
			\end{array}\right]\left[\begin{array}{cc}
				\Gamma_1 \Sigma_1^{-1} & \left(U_1^{\mathrm{T}} B\right)H_2H_2^\mathrm T\ \\
				I &
			\end{array}\right]\left[\begin{array}{cc}
				\Sigma_1 Y_1^{\mathrm{T}} & \\
				& I
			\end{array}\right]
			\\
			&=\left[\begin{array}{ll}
				U_1 Y_2 & \\
				& V_1
			\end{array}\right]\left[\begin{array}{cc}
				\Sigma_2^{\mathrm{T}} & \Gamma_2^{\mathrm{T}} \\
				V_2 &
			\end{array}\right]\left[\begin{array}{ll}
				V_2^{\mathrm{T}} \Sigma_1 Y_1^{\mathrm{T}} & \\
				& U_2^{\mathrm{T}}
			\end{array}\right]\\
			&=\left[\begin{array}{ll}
				U_1 Y_2 & \\
				& V_1 V_2
			\end{array}\right]\left[\begin{array}{cc}
				\Sigma_2^{\mathrm{T}} \Gamma_G & \Gamma_2^{\mathrm{T}} \\
				\Gamma_G &
			\end{array}\right]\left[\begin{array}{cc}
				Y_1 \Sigma_1 V_2 \Gamma_G^{-1} & \\
				& U_2
			\end{array}\right]^{\mathrm{T}}\\
			&\triangleq \left[\begin{array}{ll}
				Z & \\
				& V
			\end{array}\right]\left[\begin{array}{ll}
				D_A & D_B \\
				D_G &
			\end{array}\right]\left[\begin{array}{ll}
				W & \\
				& U
			\end{array}\right]^{\mathrm{T}} .
		\end{align*}
	We summarize the details in Algorithm \ref{Al-R-RSVD}.
	Notice that (\ref{R-RSVD}) indicates that our randomized approach provides an exact factorization for $A$, as a direct consequence of (\ref{R-RSVD-GSVD Stage1}),
	while it does not hold for matrices $B$ and $G$.
	We present a detailed analysis of the approximation error in the following theorem.
	\begin{theorem}
		Suppose that
		$B \in \mathbb{R}^{m \times l}$ and $G \in \mathbb{R}^{d \times n}$ with $l\ge d\ge m \ge n$
		and $p$ is an oversampling parameter.
		Let  $\tilde{B}$ and $\tilde{G}$ be the approximation of $B$ and $G$ computed by Algorithm \ref{Al-R-RSVD},
		then
	\begin{equation}\label{Err of R-RSVD-B}
		\|B-\tilde{B}\|
		\le
		\left(1+6 \sqrt{(k+p) p \log p}\right) \sigma_{k+1}(B)+ 3 \sqrt{k+p} \sqrt{\sum_{j>k} \sigma_j^2(B)},
	\end{equation}
	\begin{equation}\label{Err of R-RSVD-G}
		\|G-\tilde{G}\|
		\le
		\left(1+6 \sqrt{n (n-k) \log (n-k)}\right) \sigma_{k+1}(G) +3 \sqrt{n\sum_{j>k} \sigma_j^2(G)}
	\end{equation}
	hold with probability not less than $1-3p^{-p}$ and $1-(n-k)^{-(n-k)}$ respectively.
	\end{theorem}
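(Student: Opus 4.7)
The plan is to reduce both bounds to direct applications of the randomized range-finder inequality~(\ref{Err of RGSVD}), which is already cited in the excerpt. The key preliminary observation is that, although the algorithm presents $\tilde B$ and $\tilde G$ in a somewhat convoluted way as products built from the two-stage approximate GSVDs, each of them actually collapses to a single one-sided projection of the original matrix.

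First I would verify the identity $\tilde G = H_1 H_1^{\mathrm T} G$. Stage~1 computes the exact GSVD of the pair $(A, H_1 H_1^{\mathrm T} G)$, so the resulting $V_1,\Sigma_1,Y_1$ satisfy $H_1 H_1^{\mathrm T} G = V_1\,[\Sigma_1^{\mathrm T}, 0]^{\mathrm T}\,Y_1^{\mathrm T}$ exactly; the only approximation made is the replacement of $G$ by $H_1 H_1^{\mathrm T} G$ before calling the GSVD. Stage~2 then leaves $G$ untouched, and the block-algebra display preceding Algorithm~\ref{Al-R-RSVD} shows that the factors $V_2, \Gamma_G$ and $\Gamma_G^{-1}$ telescope out of $V D_G W^{\mathrm T}$, leaving $V D_G W^{\mathrm T} = V_1\,[\Sigma_1^{\mathrm T},0]^{\mathrm T}\,Y_1^{\mathrm T} = H_1 H_1^{\mathrm T} G$. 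An analogous bookkeeping for $\tilde B = Z D_B U^{\mathrm T}$ gives $\tilde B = U_1 (U_1^{\mathrm T} B H_2 H_2^{\mathrm T}) = B H_2 H_2^{\mathrm T}$, where the last equality uses $U_1 U_1^{\mathrm T} = I_m$.

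With these identities, the bound for $B$ is immediate: $\|B - \tilde B\| = \|B(I - H_2 H_2^{\mathrm T})\| = \|(I - H_2 H_2^{\mathrm T}) B^{\mathrm T}\|$. Because $H_2$ is obtained by QR of $(B^{\mathrm T} U_1)\Omega_2$ with Gaussian $\Omega_2$, and $U_1 \Omega_2$ is again Gaussian by rotational invariance, $H_2 H_2^{\mathrm T}$ is exactly the orthogonal projector produced by the standard randomized range finder applied to $B^{\mathrm T}$. Applying~(\ref{Err of RGSVD}) with oversampling parameter $p$ and noting $\sigma_j(B^{\mathrm T}) = \sigma_j(B)$ yields~(\ref{Err of R-RSVD-B}) with failure probability at most $3p^{-p}$. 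For $G$, the same reasoning applied to $H_1$ (which is the randomized range finder for $G$) gives $\|G - \tilde G\| = \|(I - H_1 H_1^{\mathrm T}) G\|$. The difference is only that the oversampling is pinned at $p_1 = n - k$ so that $\Sigma_1$ is square; substituting $k + p_1 = n$ into~(\ref{Err of RGSVD}) produces~(\ref{Err of R-RSVD-G}) with failure probability at most $3(n-k)^{-(n-k)}$.

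The step that requires the most care is the bookkeeping that reduces $\tilde G$ and $\tilde B$ to the single projections $H_1 H_1^{\mathrm T} G$ and $B H_2 H_2^{\mathrm T}$; one must check that every auxiliary factor introduced by the two chained GSVDs (the scaling matrix $\Gamma_G$, the orthogonal $V_2$, $Y_2$, $U_2$, and the restriction of $\Sigma_1$ to the square block) cancels. Once that is done, no new probabilistic analysis is needed: everything reduces to a single invocation of the Halko--Martinsson--Tropp bound per matrix, and the two high-probability events are independent because they use independent Gaussian draws $\Omega_1$ and $\Omega_2$.
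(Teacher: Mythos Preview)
Your proposal is correct and follows essentially the same route as the paper: both reduce $G-\tilde G$ and $B-\tilde B$ to the single-sided projection errors $E_G=(I-H_1H_1^{\mathrm T})G$ and $U_1E_B^{\mathrm T}=B(I-H_2H_2^{\mathrm T})$, then invoke~(\ref{Err of RGSVD}) once per matrix. The only noteworthy difference is in the $B$ step: the paper applies~(\ref{Err of RGSVD}) to $U_1^{\mathrm T}B$ and then appeals to the interlacing inequality $\sigma_j(U_1^{\mathrm T}B)\le\sigma_j(B)$, whereas you observe that $U_1\Omega_2$ is itself standard Gaussian (since $U_1$ is orthogonal) and so the range finder is applied directly to $B^{\mathrm T}$. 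Your argument is marginally cleaner, because the interlacing bound the paper uses is in fact an equality here ($U_1$ being square orthogonal), so the extra lemma citation is unnecessary; conversely, the paper's version would still go through if $U_1$ were merely a tall orthonormal block, where your rotational-invariance shortcut would not.
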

	\begin{proof}
      Let $E_G$ and $E_B$ be the error matrices such that
		\begin{equation}\label{Proof of R-RSVD eq1}
			G=V_1\Sigma_1Y_1^{\mathrm{T}}+E_G,
			\qquad
			B^{\mathrm{T}}U_1=U_2\Gamma_2Y_2^{\mathrm{T}}+E_B,
			\qquad
			\Sigma_1^{-1} \Gamma_1^\mathrm T = V_2\Sigma_2Y_2^{\mathrm{T}}.
		\end{equation}
	Inserting (\ref{component of R-RSVD}) and (\ref{Proof of R-RSVD eq1}) into $\tilde{B}$ and $\tilde{G}$, we have
		\begin{equation*}
				B-\tilde{B}
				=
				B-ZD_BU^{\mathrm{T}}
				=
				B-(U_1Y_2)\Gamma_2^{\mathrm{T}}U_2^{\mathrm{T}}
				=
				B-U_1(U_1^{\mathrm{T}}B-E_B^{\mathrm{T}})
				=
				U_1E_B^{\mathrm{T}},
		\end{equation*}
		\begin{equation*}
				G-\tilde{G}
				=
				G-VD_GW^{\mathrm{T}}
				=
				G-(V_1V_2)\Gamma_G(Y_1\Sigma_1V_2\Gamma_G^{-1})^{\mathrm{T}}
				=
				G-V_1\Sigma_1Y_1^{\mathrm{T}}
				=
				E_G.
	\end{equation*}
	During randomization for the GSVD of $(A, G)$, we set the oversampling parameter $p'=n-k$.
	By the probabilistic error bound (\ref{Err of RGSVD}), we have
	\begin{equation*}
		\|G-\tilde{G}\| \le \|E_G\|
		\le
		\left(1+6 \sqrt{n (n-k) \log (n-k)}\right) \sigma_{k+1}(G)+3 \sqrt{n\sum_{j>k} \sigma_j^2(G)},
	\end{equation*}
	which holds with probability not less than $1-3(n-k)^{-(n-k)}$,
	and similarly
	\begin{equation*}
		\begin{aligned}
			\|B-\tilde{B}\| \le \|E_B\|
			&\le
			\left(1+6 \sqrt{(k+p) p \log p}\right) \sigma_{k+1}(U_1^{\mathrm{T}}B)+3 \sqrt{k+p} \sqrt{\sum_{j>k} \sigma_j^2(U_1^{\mathrm{T}}B)}
			\\&\le
			\left(1+6 \sqrt{(k+p) p \log p}\right) \sigma_{k+1}(B)+ 3 \sqrt{k+p} \sqrt{\sum_{j>k} \sigma_j^2(B)},
		\end{aligned}
	\end{equation*}
	with probability not less than $1-p^{-p}$,
	where we apply the result in \cite[Lemma 3.3.1]{horn1991PCtopics} that
		$\sigma_j(U_1^{\mathrm{T}}B)\le \sigma_j(B)$
	when $U_1$ is orthonormal.
	\end{proof}

	\begin{algorithm}[htb]
		\caption{Randomized RSVD algorithm}
		\label{Al-R-RSVD}
		\hspace*{0.02in} {\bf Input:}
		$A \in \mathbb{R}^{m \times n}$,
		$B \in \mathbb{R}^{m \times \ell}$,
		and $G \in \mathbb{R}^{d \times n}$, with
		with $\ell \ge d \ge m\ge n$, desired rank $k$, and the oversampling parameter $p$.
		\\
		\hspace*{0.02in} {\bf Output:}
		an RSVD of matrix triplet $(A,B,G)$, 
		$A=ZD_AW^{\mathrm{T}}$, $B\approx ZD_BU^{\mathrm{T}}$, $G\approx VD_GW^{\mathrm{T}}$.
		\begin{algorithmic}[1]
			\State Generate an $n\times n$ Gaussian random matrix $\Omega_1$.
			\State Form the $d\times n$ matrix $G\Omega_1$.
			\State Compute the $d\times n$ orthonormal matrix $H_1$ via the QR factorization
			$G\Omega_1 = H_1R$.
			\State Compute the GSVD of $(A,H_1^{\mathrm{T}}G)$:
			$\left[\begin{array}{c} A \\ H_1^{\mathrm{T}}G\end{array}\right]
			=\left[\begin{array}{ll}U_1 & \\ & \tilde{V}_1\end{array}\right]
			\left[\begin{array}{l}\Gamma_1 \\ \Sigma_1 \end{array}\right] Y_1^{\mathrm{T}}$.
			\State Form the $d\times n$ orthonormal matrix $V_1=H_1\tilde{V}_1$.
			\State Form the $m\times k_2$ matrix $G\Omega_2$.
			\State Compute the $(k_2+p)\times n$ orthonormal matrix $H_2$ via the QR factorization
			$(B^{\mathrm{T}}U_1)\Omega_2 = H_2R$.
			\State Compute the GSVD of
			$\left[H_2^{\mathrm{T}}\left(B^{\mathrm{T}}U_1\right),\Sigma_1^{-1}\Gamma_1^{\mathrm{T}}\right]$:
			$\left[\begin{array}{c}
				 H_2^{\mathrm{T}}\left(B^{\mathrm{T}}U_1\right) \\ \Sigma_1^{-1}\Gamma_1^{\mathrm{T}}
			 	   \end{array}\right]
			=\left[\begin{array}{ll}\tilde{U}_2 & \\ & V_2 \end{array}\right]
			\left[\begin{array}{l}\Gamma_2 \\ \Sigma_2 \end{array}\right] Y_2^{\mathrm{T}}$,
			where $\Sigma_2=\operatorname{diag}\left(\sigma_1, \ldots, \sigma_n\right)$.
			\State Form the $(k_2+p)\times k_2$ orthonormal matrix $U_2=H_2\tilde{U}_2$.
			\State Form the diagonal matrix
			$\Gamma_G=\mathrm{diag}(\gamma_1,\ldots, \gamma_n)$, $\gamma_i= \frac{\sigma_i}{\sqrt{\sigma_i^2+1}}$. 
			\State Form the orthonormal matrices
			$U=U_2\in\mathbb{R}^{(k_2+p)\times k_2}$,
			$V=V_1V_2\in\mathbb{R}^{d\times k_1}$,
			diagonal matrices $D_A=\Sigma_2^{\mathrm{T}}\Gamma_G$,
			$D_B=\Gamma_2^{\mathrm{T}}$, $D_G=\Gamma_G$,
			and the nonsingular matrices $Z=U_1Y_2\in\mathbb{R}^{m\times m}$,
			$W=Y_1\Sigma_1V_2\Gamma_G^{-1}\in\mathbb{R}^{n\times n}$.
		\end{algorithmic}
	\end{algorithm}
	\subsection{Randomization for L-DEIM Based RSVD-CUR}
	\hskip 2em
	Now we are ready to establish an efficient procedure for computing an approximate RSVD-CUR decomposition, along with a theoretical analysis of its error bound.
	Given a matrix triplet $(A,B,G)$, with
	$A\in\mathbb{R}^{m\times n}$ ,
	$B\in\mathbb{R}^{m\times l}$, and
	$G\in\mathbb{R}^{d\times n}$ ($\ell \ge d \ge m \ge n$) where $B$ and $G$ are of full rank.
	Our approach provides a rank-$k$ RSVD-CUR decomposition of the form (\ref{RSVD of (A,B,G)}), and the choice of indices $\mathbf{s}$, $\mathbf{s}_G$, $\mathbf{p}$, and $\mathbf{p}_B$ is guided by the orthonormal matrices and nonsingular matrices from the approximation of the rank-$\widehat{k}$ RSVD, where $\widehat{k}\le k$.
	The details are summarized in Algorithm \ref{Al-R-LDEIM-RSVD-CUR}.
	
	\hskip 2em
	The innovation of our approach has two aspects.
	First, we leverage the randomized algorithms (Algorithm \ref{Al-R-RSVD}) to accomplish the truncation procedure of the RSVD, where the random sampling technique can be used to identify a subspace that captures most of the action of a matrix,
	projecting a large-scale problem randomly to a smaller subspace that contains the main information.
	We then apply the deterministic algorithm to the associated small-scale problem,
	obtaining an approximate rank-$\widehat{k}$ RSVD of the form (\ref{R-RSVD}).
	Second,
	to further strengthen the efficiency of our algorithm scheme, we adopt the L-DEIM method for sampling instead of the DEIM.
	As described in Subsection \ref{subsection: Subset selection procedure}, compared to the DEIM scheme, the L-DEIM procedure is
	computationally more efficient and requires less than $k$ input vectors to select the indices.
	\begin{algorithm}[t!]
		\caption{L-DEIM based RSVD-CUR randomized algorithm}
		\label{Al-R-LDEIM-RSVD-CUR}
		\hspace*{0.02in} {\bf Input:}
		$A \in \mathbb{R}^{m \times n}$,
		$B \in \mathbb{R}^{m \times l}$,
		and
		$G \in \mathbb{R}^{d \times n}$
		with $l=d\ge m\ge n$, desired rank $k$, the \hspace*{0.02in} oversampling
		parameter $p$ and the specified parameter $\widehat{k}$.
		\\
		\hspace*{0.02in} {\bf Output:}
		A rank-$k$ RSVD-CUR decomposition\\
		\hspace*{0.02in}
		$A\approx A(:,\mathbf{p}) \cdot M_A \cdot A(\mathbf{s},:)$,
		$B\approx B(:,\mathbf{p}_B) \cdot M_B \cdot B(\mathbf{s},:)$,
		$G\approx G(:,\mathbf{p}) \cdot M_G \cdot G(\mathbf{s}_G,:)$.
		\begin{algorithmic}[1]
			\State Generate an $n\times n$ Gaussian random matrix $\Omega_1$.
			\State Form the $d\times n$ matrix $G\Omega_1$.
			\State Compute the $d\times n$ orthonormal matrix $H_1$ via the QR factorization
			$G\Omega_1 = H_1R_1$.
			\State Compute the GSVD of $(H_1^{\mathrm{T}}G,A)$:
			$\left[\begin{array}{c} H_1^{\mathrm{T}} G \\ A\end{array}\right]
			=\left[\begin{array}{ll}\tilde{V}_1 & \\ & U_1\end{array}\right]
			\left[\begin{array}{l}\Sigma_1 \\ \Gamma_1 \end{array}\right] Y_1^{\mathrm{T}}$.
			\State Form the $n\times n$ orthogonal matrix $V_1 = H_1\tilde{V}_1$.
			\State Generate an $m\times (\widehat{k}+p)$ Gaussian random matrix $\Omega_2$.
			\State Form the $l\times (\widehat{k}+p)$ matrix $(B^{\mathrm{T}}U_1)\Omega_2$.
			\State Compute the $l\times (\widehat{k}+p)$ orthonormal matrix $H_2$ via the QR factorization
			$(B^{\mathrm{T}}U_1)\Omega_2= H_2R_2$.
			\State Compute the GSVD of
			$(H_2^{\mathrm{T}}(B^{\mathrm{T}}U_1),\Sigma_1^{-1}\Gamma_1^{\mathrm{T}})$:
			$\left[\begin{array}{c}
				H_2^{\mathrm{T}} (B^{\mathrm{T}}U_1) \\ \Sigma_1^{-1}\Gamma_1^{\mathrm{T}}
			\end{array}\right]
			=\left[\begin{array}{ll}\tilde{U}_2 & \\ & V_2\end{array}\right]
			\left[\begin{array}{l}\Gamma_2 \\ \Sigma_2 \end{array}\right] Y_2^{\mathrm{T}}$,
			where $\Sigma_2=\operatorname{diag}\left(\sigma_1, \ldots, \sigma_n\right)$.
			\State Form the $l\times (\widehat{k}+p)$ orthonormal matrix $U_2 = H_2\tilde{U}_2$.
			\State Form the $n\times n$ diagonal matrix
			$\Gamma_G=\mathrm{diag}(\gamma_1,\ldots, \gamma_n)$, $\gamma_i=\frac{\sigma_i}{\sqrt{\sigma_i^2+1}}$.
			\State Form the orthonormal matrices
			$V=V_1V_2$, $U=U_2$ and nonsingular matrices $Z=U_1Y_2$ and $W=Y_1\Sigma_1V_2\Gamma_G^{-1}$.
			\State $\mathbf{p}_B=\mathrm{l\mbox{-}deim}(U)$.
			\State $\mathbf{p}=\mathrm{l\mbox{-}deim}(W)$.
			\State $\mathbf{s}=\mathrm{l\mbox{-}deim}(Z)$.
			\State $\mathbf{s}_G=\mathrm{l\mbox{-}deim}(V)$.
			%
			\State
			Compute
			$M_A=A(:, \mathbf{p}) \backslash\left(A / A\left(\mathbf{s}_A,:\right)\right)$,
			$M_B=B(:, \mathbf{p}) \backslash\left(B / B\left(\mathbf{s}_B,:\right)\right)$.
		\end{algorithmic}
	\end{algorithm}

	\hskip 2em
	We provide a rough error analysis that shows that the accuracy of the proposed
	algorithm is closely associated with the error of the approximation RSVD.
	The analysis follows the results in \cite{sorensen2016SIAMdeim,gidisu2022SIAMgeneralized,gidisu2022ARXIVrsvd} with some necessary modifications.
	We begin by partitioning the matrices in (\ref{R-RSVD})
	\begin{equation*}
		\begin{aligned}
			U &=\left[\begin{array}{ll}
				U_{\widehat{k}} & \widehat{U}
			\end{array}\right], \quad V=\left[\begin{array}{ll}
				V_{\widehat{k}} & \widehat{V}
			\end{array}\right], \quad W=\left[\begin{array}{ll}
				W_{\widehat{k}} & \widehat{W}
			\end{array}\right], \quad Z=\left[\begin{array}{ll}
				Z_{\widehat{k}} & \widehat{Z}
			\end{array}\right], \\
			D_A &=\operatorname{diag}\left(D_{A_{\widehat{k}}}, \widehat{D}_A\right), \quad
			D_B=\operatorname{diag}\left(D_{B_{\widehat{k}}}, \widehat{D}_B\right), \quad
			D_G=\operatorname{diag}\left(D_{G_{\widehat{k}}}, \widehat{D}_G\right),
		\end{aligned}
	\end{equation*}
	where
	$\widehat{D}_A \in \mathbb{R}^{(m-\widehat{k}) \times(n-\widehat{k})}$,
	$\widehat{D}_B \in \mathbb{R}^{(m-\widehat{k}) \times(l-\widehat{k})}$, and
	$\widehat{D}_G \in \mathbb{R}^{(d-\widehat{k}) \times(n-\widehat{k})}$.
	As with the DEIM-type GCUR method in \cite{gidisu2022SIAMgeneralized}, the lack of orthogonality of the basis vectors in $W$ and $Z$ from the RSVD
	necessitates some additional work.
	Mimicking the techniques in \cite{gidisu2022ARXIVrsvd},
	here we take a QR factorization of $W$ and $Z$ to obtain an orthonormal basis to facilitate the analysis,
	\begin{equation*}
		\begin{aligned}
			{\left[\begin{array}{ll}
					Z_{\widehat{k}} & \widehat{Z}
				\end{array}\right]=Z=Q_Z T_Z=\left[\begin{array}{ll}
					Q_{Z_{\widehat{k}}} & \widehat{Q}_Z
				\end{array}\right]\left[\begin{array}{cc}
					T_{Z_{\widehat{k}}} & T_{Z_{12}} \\
					0 & T_{Z_{22}}
				\end{array}\right]=\left[\begin{array}{ll}
					Q_{Z_{\widehat{k}}} T_{Z_{\widehat{k}}} & Q_Z \widehat{T}_Z
				\end{array}\right],} \\
			{\left[\begin{array}{ll}
					W_{\widehat{k}} & \widehat{W}
				\end{array}\right]=W=Q_W T_W=\left[\begin{array}{ll}
					Q_{W_{\widehat{k}}} & \widehat{Q}_W
				\end{array}\right]\left[\begin{array}{cc}
					T_{W_{\widehat{k}}} & T_{W_{12}} \\
					0 & T_{W_{22}}
				\end{array}\right]=\left[\begin{array}{ll}
					Q_{W_{\widehat{k}}} T_{W_{\widehat{k}}} & Q_W \widehat{T}_W
				\end{array}\right],}
		\end{aligned}
	\end{equation*}
	where we have denoted
	\begin{equation*}
		\widehat{T}_Z:=\left[\begin{array}{c}
			T_{Z_{12}} \\
			T_{Z_{22}}
		\end{array}\right],
		\quad
		\widehat{T}_W:=\left[\begin{array}{l}
			T_{W_{12}} \\
			T_{W_{22}}
		\end{array}\right].
	\end{equation*}
	It is straightforward to check that
	\begin{equation*}
		\begin{aligned}
			B &
			=Z_{\widehat{k}} D_{B_k} U_{\widehat{k}}^{\mathrm{T}} + \widehat{Z} \widehat{D}_B \widehat{U}^{\mathrm{T}} + E_B
			=Q_{Z_{\widehat{k}}} T_{Z_{\widehat{k}}} D_{B_{\widehat{k}}} U_{\widehat{k}}^{\mathrm{T}}+Q_Z \widehat{T}_Z \widehat{D}_B \widehat{U}^{\mathrm{T}} + E_B,
			\\
			G &
			=V_{\widehat{k}} D_{G_{\widehat{k}}} W_{\widehat{k}}^{\mathrm{T}}+\widehat{V} \widehat{D}_G \widehat{W}^{\mathrm{T}} + E_G
			=V_{\widehat{k}} D_{G_{\widehat{k}}} T_{W_{\widehat{k}}}^{\mathrm{T}} Q_{W_{\widehat{k}}}^{\mathrm{T}}+V_{\widehat{k}} \widehat{D}_G \widehat{T}_W^{\mathrm{T}} Q_W^{\mathrm{T}} + E_G,
		\end{aligned}
	\end{equation*}
	where $E_B$ and $E_G$ satisfy the probabilistic error bounds (\ref{Err of R-RSVD-B}) and (\ref{Err of R-RSVD-G}).
	Since Algorithm \ref{Al-R-RSVD} provides an exact decomposition of $A,$ the error bound for $A$ in \cite[Proposition 2]{gidisu2022ARXIVrsvd}
	\begin{equation}\label{Err of R-LDEIM-RSVD-CUR-A}
		\|A-C_A M_A R_A\|
		\le
		\alpha_{k+1} \cdot
		\left(\sqrt{\frac{n {\widehat{k}}}{3}} 2^{\widehat{k}}+\sqrt{\frac{m {\widehat{k}}}{3}} 2^{\widehat{k}}\right) \cdot
		\left\|\widehat{T}_W\right\|\left\|\widehat{T}_Z\right\|,
	\end{equation}
	still holds. Here $\alpha_{k+1}$ is the $(k+1)$th diagonal entry of $D_A$, which is ordered non-increasingly.
	The following theorem roughly quantifies the error bounds for
	$\|B-C_B M_B R_B\|$ and $\|G-C_G M_G R_G\|.$
	\begin{theorem}\label{Th err of R-LDEIM-RSVD-CUR}
		Suppose that a rank-$k$ RSVD-CUR decomposition for $(A,B,G)$ of the form (\ref{RSVD-CUR of (A,B,G)}) is produced by Algorithm \ref{Al-R-LDEIM-RSVD-CUR}, where
		$S=I(:,\mathbf{s})$, $S_G=I(:,\mathbf{s}_G)$, $P=I(:,\mathbf{p})$ and $P_B=I(:,\mathbf{p}_B)$ are the index selection matrices,
		and $p$ is the oversampling parameter. Let $\eta_G = \sqrt{\frac{n {\widehat{k}}}{3}} 2^{\widehat{k}} + \sqrt{\frac{d {\widehat{k}}}{3}} 2^{\widehat{k}}$,
        and $\eta_B = \sqrt{\frac{l {\widehat{k}}}{3}} 2^{\widehat{k}} + \sqrt{\frac{m {\widehat{k}}}{3}} 2^{\widehat{k}}$. Then
				\begin{equation*}
						\|G-C_G M_G R_G\|
						\le
						\eta_G \cdot
						\left(
					\|E_G\| 
						+
						\left\|\widehat{T}_W\right\|
						\right),
                       ~~~
                       \|B-C_B M_B R_B\|
						\le
						\eta_B \cdot
						\left(
					\|E_B \| 
						+
						\left\|\widehat{T}_Z\right\|
						\right),
				\end{equation*}
		where
   \begin{equation*}
		\|E_G\| \le \left(1+6 \sqrt{n(n-{\widehat{k}}) \log (n-\widehat{k})}\right) \sigma_{\widehat{k}+1}(G) +3 \sqrt{n \sum_{j>\widehat{k}} \sigma_j^2(G)},
	\end{equation*}
   \begin{equation*}
		\|E_B \| \leq	\left(1+6 \sqrt{(\widehat{k}+p) p \log p}\right) \sigma_{\widehat{k}+1}(B)+ 3  \sqrt{(\widehat{k}+p )\sum_{j>\widehat{k}} \sigma_j^2(B)}
	\end{equation*}
   which hold with probability not less than $1-(n-\widehat{k})^{-(n-\widehat{k})}$ and $1-3 p^{-p},$ respectively.
	
	\end{theorem}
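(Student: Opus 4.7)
The plan is to adapt the argument used for Theorem \ref{Err of R-DEIM-GCUR}, treating $G$ and $B$ in parallel. The new ingredient is that Algorithm \ref{Al-R-RSVD} delivers only an approximate RSVD, so I will split $G = \tilde{G} + E_G$ and $B = \tilde{B} + E_B$ with $\tilde{G} = V D_G W^{\mathrm{T}}$ and $\tilde{B} = Z D_B U^{\mathrm{T}}$, and inherit the probabilistic bounds (\ref{Err of R-RSVD-G}) and (\ref{Err of R-RSVD-B}) for $\|E_G\|$ and $\|E_B\|$ from the previous theorem.

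For $G$, I would start from $G - C_G M_G R_G = (I - C_G C_G^{\dagger}) G + C_G C_G^{\dagger} G(I - R_G^{\dagger} R_G)$, apply the triangle inequality using that $C_G C_G^{\dagger}$ is an orthogonal projection, and then invoke \cite[Proposition 4.7]{gidisu2022SIAMgeneralized} combined with \cite[Lemma 4.1]{sorensen2016SIAMdeim} for the interpolatory projectors $\mathbb{P} = P(Q_{W_{\widehat{k}}}^{\mathrm{T}} P)^{-1} Q_{W_{\widehat{k}}}^{\mathrm{T}}$ and $\mathbb{S} = V_{\widehat{k}}(S_G^{\mathrm{T}} V_{\widehat{k}})^{-1} S_G^{\mathrm{T}}$ to arrive at $\|G - C_G M_G R_G\| \le \|(Q_{W_{\widehat{k}}}^{\mathrm{T}} P)^{-1}\| \cdot \|G(I - Q_{W_{\widehat{k}}} Q_{W_{\widehat{k}}}^{\mathrm{T}})\| + \|(S_G^{\mathrm{T}} V_{\widehat{k}})^{-1}\| \cdot \|(I - V_{\widehat{k}} V_{\widehat{k}}^{\mathrm{T}}) G\|$. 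Splitting $G = \tilde{G} + E_G$ inside each factor and using $\|I - Q_{W_{\widehat{k}}} Q_{W_{\widehat{k}}}^{\mathrm{T}}\| = \|I - V_{\widehat{k}} V_{\widehat{k}}^{\mathrm{T}}\| = 1$ introduces $\|E_G\|$ on each term. A direct block multiplication, using the partitioning and QR factorizations of $W$ written just before the statement, will show that $\tilde{G}(I - Q_{W_{\widehat{k}}} Q_{W_{\widehat{k}}}^{\mathrm{T}}) = \widehat{V}\widehat{D}_G T_{W_{22}}^{\mathrm{T}} \widehat{Q}_W^{\mathrm{T}}$ and $(I - V_{\widehat{k}} V_{\widehat{k}}^{\mathrm{T}}) \tilde{G} = \widehat{V}\widehat{D}_G \widehat{W}^{\mathrm{T}}$; since $\widehat{V}$ and $\widehat{Q}_W$ have orthonormal columns, $T_{W_{22}}$ is a sub-block of $\widehat{T}_W$, $\|\widehat{W}\| = \|\widehat{T}_W\|$, and $\|\widehat{D}_G\| \le 1$ (because $\gamma_i \in (0,1)$), both quantities are bounded by $\|\widehat{T}_W\|$. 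Finally, the DEIM bounds from \cite[Lemma 4.4]{sorensen2016SIAMdeim} give $\|(Q_{W_{\widehat{k}}}^{\mathrm{T}} P)^{-1}\| < \sqrt{n\widehat{k}/3}\,2^{\widehat{k}}$ and $\|(S_G^{\mathrm{T}} V_{\widehat{k}})^{-1}\| < \sqrt{d\widehat{k}/3}\,2^{\widehat{k}}$, which together assemble into $\eta_G$.

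For $B$, the argument is entirely parallel with the roles of $V$ and $W$ replaced by $U$ and $Z$. I would use the QR factorization $Z = Q_Z T_Z$ (noting that $U_{\widehat{k}}$ is already orthonormal) and follow the same three steps. The analogous block multiplication yields $(I - Q_{Z_{\widehat{k}}} Q_{Z_{\widehat{k}}}^{\mathrm{T}})\tilde{B} = \widehat{Q}_Z T_{Z_{22}} \widehat{D}_B \widehat{U}^{\mathrm{T}}$ and $\tilde{B}(I - U_{\widehat{k}} U_{\widehat{k}}^{\mathrm{T}}) = Q_Z \widehat{T}_Z \widehat{D}_B \widehat{U}^{\mathrm{T}}$, each bounded by $\|\widehat{T}_Z\|$ since $\|\widehat{D}_B\| \le 1$ (the $\beta_i \in (0,1)$ and the augmenting identity blocks have unit entries). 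The DEIM bounds $\sqrt{l\widehat{k}/3}\,2^{\widehat{k}}$ and $\sqrt{m\widehat{k}/3}\,2^{\widehat{k}}$ then produce $\eta_B$, and the two probability statements follow from the bounds for $E_G$ and $E_B$ already established in (\ref{Err of R-RSVD-G}) and (\ref{Err of R-RSVD-B}).

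The main obstacle I anticipate is the block-algebraic verification that, after multiplying by $I - Q_{W_{\widehat{k}}} Q_{W_{\widehat{k}}}^{\mathrm{T}}$ (respectively $I - V_{\widehat{k}} V_{\widehat{k}}^{\mathrm{T}}$), the leading contribution from $\tilde{G}$ cancels and the residual collapses to exactly $\widehat{V}\widehat{D}_G T_{W_{22}}^{\mathrm{T}}\widehat{Q}_W^{\mathrm{T}}$ rather than a messier expression involving $T_{W_{12}}$; the same care is required on the $B$ side. The identities driving these cancellations are $Q_{W_{\widehat{k}}}^{\mathrm{T}} Q_{W_{\widehat{k}}} = I$, $\widehat{Q}_W^{\mathrm{T}} Q_{W_{\widehat{k}}} = 0$, together with the block upper-triangular shape of $T_W$. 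Once these collapses are confirmed, everything else reduces to standard submultiplicative norm inequalities and the established DEIM and randomized-RSVD bounds.
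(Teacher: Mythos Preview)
Your overall strategy is the same as the paper's and the block algebra you anticipate is exactly what the paper carries out. There is, however, one technical point where your argument as written would fail: because Algorithm~\ref{Al-R-LDEIM-RSVD-CUR} uses L-DEIM, the selection matrix $P$ has $k$ columns while $Q_{W_{\widehat{k}}}$ has only $\widehat{k}\le k$, so $Q_{W_{\widehat{k}}}^{\mathrm{T}}P\in\mathbb{R}^{\widehat{k}\times k}$ is not square and $(Q_{W_{\widehat{k}}}^{\mathrm{T}}P)^{-1}$ does not exist. Consequently you cannot invoke \cite[Proposition 4.7]{gidisu2022SIAMgeneralized}, \cite[Lemma 4.1]{sorensen2016SIAMdeim}, or \cite[Lemma 4.4]{sorensen2016SIAMdeim} verbatim, since those are stated for the square DEIM projector.

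The paper replaces the inverse by a pseudoinverse, setting $\mathbb{P}=P(Q_{W_{\widehat{k}}}^{\mathrm{T}}P)^{\dagger}Q_{W_{\widehat{k}}}^{\mathrm{T}}$, and instead cites \cite[Lemmas 2 and 3]{hendryx2021MLextended} for $\|(I-C_GC_G^{\dagger})G\|\le\|G(I-\mathbb{P})\|$, \cite[Lemma 4.1]{szyld2006NMmany} for $\|I-\mathbb{P}\|=\|\mathbb{P}\|=\|(Q_{W_{\widehat{k}}}^{\mathrm{T}}P)^{\dagger}\|$, and \cite{gidisu2022ARXIVrsvd} for the bound $\|(Q_{W_{\widehat{k}}}^{\mathrm{T}}P)^{\dagger}\|<\sqrt{n\widehat{k}/3}\,2^{\widehat{k}}$ in the rectangular setting. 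With these substitutions the rest of your plan---the splitting $G=\tilde G+E_G$, the block-triangular cancellation yielding $\widehat{V}\widehat{D}_GT_{W_{22}}^{\mathrm{T}}\widehat{Q}_W^{\mathrm{T}}$, the use of $\|\widehat{D}_G\|\le 1$, and the parallel treatment of $B$---matches the paper's proof line for line.
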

	\begin{proof}
		It suffices to prove the bound for
		$\|G-C_G M_G R_G\|.$
	From the definition of $M_G$, using the result in \cite{mahoney2009PNAScur},
	we have
		\begin{equation}
			G-C_G M_G R_G
			=G-C_G C_G^{+} G R_G^{\dagger} R_G
			=(I-C_G C_G^{\dagger})G + C_G C_G^{\dagger} G (I-R_G^{\dagger} R_G).
		\end{equation}
        Then $$\left\|G-C_G M_G R_G\right\| \le
				\|(I-C_G C_G^{\dagger}) G\|
				+\|G (I-R_G^{\dagger} R_G)\|.$$
		Given index selection matrix $P$ from the L-DEIM scheme on matrix $W_{\widehat{k}}$,
		and suppose that $Q_{W_{\widehat{k}}}$
		is an orthonormal basis for $\mathrm{Ran}(W_{\widehat{k}})$.
		We form 
		$\mathbb{P}=P(Q_{W_{\widehat{k}}}^{\mathrm{T}} P)^{\dagger} Q_{W_{\widehat{k}}}^{\mathrm{T}}$:
		an oblique projector with
		$P(W_{\widehat{k}}^{\mathrm{T}} P)^{\dagger} W_{\widehat{k}}^{\mathrm{T}}
		=P(Q_{W_{\widehat{k}}}^{\mathrm{T}} P)^{\dagger} Q_{W_{\widehat{k}}}^{\mathrm{T}}$
		(\cite[Equation 3.6]{chaturantabut2010SIAMnonlinear})
		and
		we also have
		$Q_{W_{\widehat{k}}}^{\mathrm{T}} \mathbb{P}
		=Q_{W_{\widehat{k}}}^{\mathrm{T}} P(Q_{W_{\widehat{k}}}^{\mathrm{T}} P)^{\dagger} Q_{W_{\widehat{k}}}^{\mathrm{T}}=Q_{W_{\widehat{k}}}^{\mathrm{T}}$,
		which implies $Q_{W_{\widehat{k}}}^{\mathrm{T}}(I-\mathbb{P})=0$.
		From \cite[Lemmas 2 and 3]{hendryx2021MLextended}, we obtain that
		\begin{equation*}
			\|(I-C_G C_G^{\dagger}) G\| \leq \|G(I-{\mathbb{P}})\| =\|G(I-Q_{W_{\widehat{k}}} Q_{W_{\widehat{k}}}^{\mathrm{T}})(I-\mathbb{P})\|
                                         \leq \|G(I-Q_{W_{\widehat{k}}} Q_{W_{\widehat{k}}}^{\mathrm{T}})\|\|I-{\mathbb{P}}\|,
		\end{equation*}
        \begin{equation*}
					\|G(I-R_G^{\dagger} R_G)\| \leq  \|(I-{\mathbb{S}}) G\| =\|(I-{\mathbb{S}})(I-V_{\widehat{k}} V_{\widehat{k}}^{\mathrm{T}}) G\|
                                               \leq \|(I-{\mathbb{S}})\|   \|(I-V_{\widehat{k}} V_{\widehat{k}}^{\mathrm{T}}) G \|.
        \end{equation*}
		Since $\widehat{k}<r$, ${\mathbb{P}} \neq 0, {\mathbb{P}} \neq I$ and $\mathbb{S}\neq 0, \mathbb{S}\neq I.$
        By \cite[Lemma 4.1]{szyld2006NMmany}, we have
		\begin{equation*}
			\|I-\mathbb{P}\|
			=\|\mathbb{P}\|
			=\|(Q_{W_{\widehat{k}}}^{\mathrm{T}} P)^{\dagger}\|,~~~
              \|I-\mathbb{S}\|=\|\mathbb{S}\|=\|(S^{\mathrm{T}} V_{\widehat{k}})^{\dagger}\|.
		\end{equation*}
		Using the partitioning of $G$, we have
	    \begin{equation*}
	    	\begin{aligned}
	    		G Q_{W_{\widehat{k}}} Q_{W_{\widehat{k}}}^{\mathrm{T}} &
	    		=\left[\begin{array}{ll}
	    			V_{\widehat{k}} & \widehat{V}
	    		\end{array}\right]
    			\left[\begin{array}{cc}
	    			D_{G_{\widehat{k}}} & 0 \\
	    			0 & \widehat{D}_G
	    		\end{array}\right]
    			\left[\begin{array}{cc}
	    			T_{W_{\widehat{k}}}^{\mathrm{T}} & 0 \\
	    			T_{W_{12}}^{\mathrm{T}} & T_{W_{22}}^{\mathrm{T}}
	    		\end{array}\right]
    			\left[\begin{array}{c}
	    			I_{\widehat{k}} \\
	    			0
	    		\end{array}\right]
    			Q_{W_{\widehat{k}}}^{\mathrm{T}}
    			+ E_G Q_{W_{\widehat{k}}} Q_{W_{\widehat{k}}}^{\mathrm{T}}
    			\\
	    		&=
	    		V_{\widehat{k}} D_{G_{\widehat{k}}} T_{W_{\widehat{k}}}^{\mathrm{T}} Q_{W_{\widehat{k}}}^{\mathrm{T}}
	    		+\widehat{V} \widehat{D}_G T_{W_{12}}^{\mathrm{T}} Q_{W_{\widehat{k}}}^{\mathrm{T}}
	    		+ E_G Q_{W_{\widehat{k}}} Q_{W_{\widehat{k}}}^{\mathrm{T}},
	    	\end{aligned}
	    \end{equation*}
    and hence
    \begin{equation*}
    	\begin{aligned}
    		G(I-Q_{W_{\widehat{k}}} Q_{W_k}^{\mathrm{T}})
    		&=\widehat{V} \widehat{D}_G \widehat{T}^{\mathrm{T}} Q^{\mathrm{T}}
    		-\widehat{V} \widehat{D}_G T_{W_{12}}^{\mathrm{T}} Q_{W_{\widehat{k}}}^{\mathrm{T}}
    		-E_G Q_{W_{\widehat{k}}} Q_{W_{\widehat{k}}}^{\mathrm{T}}
            =\widehat{V} \widehat{D}_G T_{W_{22}}^{\mathrm{T}} \widehat{Q}_W^{\mathrm{T}}
    		 - E_G Q_{W_{\widehat{k}}} Q_{W_{\widehat{k}}}^{\mathrm{T}}.
    	\end{aligned}
    \end{equation*}
	This implies
	\begin{equation*}
		\|G(I-Q_{W_{\widehat{k}}} Q_{W_{\widehat{k}}}^{\mathrm{T}})\|
		\leq
		\gamma_{\widehat{k}+1} \left\|T_{W_{22}}\right\| + \|E_G\|
		\leq
		\left\|T_{W_{22}}\right\| + \|E_G\|
	\end{equation*}
	and then
	\begin{equation*}
		\|(I-C_G C_G^{\dagger}) G\|
		\leq
		\|G(I-Q_{W_{\widehat{k}}} Q_{W_{\widehat{k}}}^{\mathrm{T}})\|\|I-{\mathbb{P}}\|
		\leq
		\|(Q_{W_{\widehat{k}}}^{\mathrm{T}} P)^{\dagger}\|
		\cdot
		( \left\|T_{W_{22}}\right\| + \|E_G\| ),
	\end{equation*}
   Similarly, we have
	\begin{equation*}
		\|G(I-R_G^{\dagger} R_G)
		\le
		\|(S^{\mathrm{T}} V_{\widehat{k}})^{\dagger}\| \cdot ( \|\widehat{T}_W\|+\|E_G\|)
		\le
		\|(S^{\mathrm{T}} V_{\widehat{k}})^{\dagger}\| \cdot ( \|\widehat{T}_W\|+\|E_G\|).
	\end{equation*}
	
	Then it follows that
	\begin{equation*}\label{Proof of R-LDEIM-RSVD-CUR eq1}
		\begin{aligned}
			\|G-C_G M_G R_G\|
			\le &
			\left( \|(Q_{W_{\widehat{k}}}^{\mathrm{T}} P)^{\dagger}\|+ \|(S^{\mathrm{T}} V_{\widehat{k}})^{\dagger}\|\right)
			\cdot
			(\|\widehat{T}_W\|+\|E_G\|).
		\end{aligned}
	\end{equation*}
    Using the upper bounds \cite{gidisu2022ARXIVrsvd}
	\begin{equation*}
		\begin{aligned}
			&\|(Q^{\mathrm{T}}_{W_{\widehat{k}}}P)^{\dagger}\| < \sqrt{\frac{n\widehat{k}}{3}}2^{\widehat{k}},
			\quad
			&\|(S_G^{\mathrm{T}} V_{\widehat{k}})^{\dagger}\| < \sqrt{\frac{d{\widehat{k}}}{3}}2^{\widehat{k}}.
		\end{aligned}
	\end{equation*}
   and applying the probabilistic error bound (\ref{Err of RGSVD}),
   we obtain the desired result.
	\end{proof}

	\hskip 2em
	Comparing the results of the error bounds in Theorem \ref{Th err of R-LDEIM-RSVD-CUR} to \cite[Theorem 4.3]{gidisu2022ARXIVrsvd} that
	\begin{equation*}
		\|G-C_G M_G R_G\|
		\le
		\gamma_{k+1}
		\cdot
		\eta_G
		\cdot
		\left\|\widehat{T}_W\right\|,
		\|B-C_B M_B R_B\|
		\le
		\eta_B
		\cdot
		\left\|\widehat{T}_Z\right\|,
	\end{equation*}
	our results involve the item
	$(1+6 \sqrt{n(n-\widehat{k}) \log (n-\widehat{k})}) \sigma_{\widehat{k}+1}(G)+3 \sqrt{n \sum_{j>\widehat{k}} \sigma_j^2(G)}$
	in the error bound of $\| G-C_G M_G R_G\|$
	and the item
	$(1+6 \sqrt{(\widehat{k}+p) p \log p}) \sigma_{\widehat{k}+1}(B)+3 \sqrt{\widehat{k}+p} \sqrt{\sum_{j>\widehat{k}} \sigma_j^2(B)}$
	in the error bound of $\|B-C_B M_B R_B\|$, respectively. 
	Therefore, our randomized algorithm works well for the matrices whose singular
	values exhibit some decay.

	\section{Numerical Examples}
	\hskip 2em
	In this section, we check the accuracy and the computational cost
	of our algorithms on several synthetic and real-world datasets.
	All computations are carried out in MATLAB R2020a on a
	computer with an AMD Ryzen 5 processor and 16 GB RAM.
	To facilitate the comparison between different algorithms, we define the following acronyms.
	
	\hskip 2em 1. DEIM-GCUR$-$
	implements the GCUR algorithm with column subset selection implemented using the DEIM algorithm (Algorithm \ref{Al-DEIM}) labeled ``DEIM-GCUR'', as summarized in \cite[Algorithm 4.1]{gidisu2022SIAMgeneralized}).
	
	\hskip 2em 2. R-GCUR $-$
	applies the randomized GCUR algorithm with column subset selection implemented using either the DEIM algorithm labeled ``R-DEIM-GCUR'',
	summarized in Algorithm \ref{Al-R-DEIM-GCUR},
	or the L-DEIM algorithm (Algorithm \ref{Al-LDEIM}) labeled ``R-LDEIM-GCUR'' as summarized in Algorithm \ref{Al-R-LDEIM-GCUR}.
	
	\hskip 2em 3. RSVD-CUR $-$
	implements the RSVD-CUR decomposition algorithm by using the DEIM labeled ``DEIM-RSVD-CUR'',
	as summarized in \cite[Algorithm 3]{gidisu2022ARXIVrsvd},
	or the L-DEIM algorithm, labeled ``LDEIM-RSVD-CUR'', as summarized in \cite[Algorithm 4]{gidisu2022ARXIVrsvd}.
	
	\hskip 2em 4. R-LDEIM-RSVD-CUR $-$
	implements the randomized RSVD-CUR algorithm based on the L-DEIM procedure labeled ``R-LDEIM-RSVD-CUR'' (Algorithm \ref{Al-R-LDEIM-RSVD-CUR}) to produce the RSVD-CUR decomposition.

	$\mathbf{Example}$ $\mathbf{5.1}$
	This experiment is a variation of prior experiments in \cite[Section 3.4.4]{hansen1998SIAMrank}, \cite[Experiment 5.1]{gidisu2022SIAMgeneralized} and \cite[Example 6.1]{sorensen2016SIAMdeim}.
	In this experiment, we examine the performance of the GCUR, R-GCUR algorithms,
	and the CUR decomposition in the context of matrix recovery of the original matrix $A$ from $A_E= A + E$, where $E$ is a noise matrix.
	First, we construct a matrix $A\in\mathbb{R}^{m\times n}$ of the form
	\begin{equation*}
		A=\sum_{j=1}^{10} \frac{2}{j} \mathbf{x}_j \mathbf{y}_j^{\mathrm{T}}+\sum_{j=11}^{50} \frac{1}{j} \mathbf{x}_j \mathbf{y}_j^{\mathrm{T}},
	\end{equation*}
	where
	$\mathbf{x}_j \in \mathbb{R}^{m}$ and $\mathbf{y}_j \in \mathbb{R}^{n}$
	are sparse vectors with random nonnegative entries, and alternatively, in MATLAB,
	$\mathbf{x}_j=\mathtt{sprand}(m,1,0.025)$ and $\mathbf{y}_j=\mathtt{sprand}(n,1,0.025)$.
	Just as in \cite{gidisu2022SIAMgeneralized}, we construct a correlated Gaussian noise $E$ whose entries have zero mean and a Toeplitz covariance structure, i.e.,
	in MATLAB
	$E = \varepsilon \frac{\|F\|}{\|A\|}F$,
	desired-cov($F$)=$\mathtt{toeplitz}$ $(0.99^0, \ldots, 0.99^{n-1})$,
	$B=\mathtt{chol}$({{desired}-{cov}}$(F)$),
	and $F=\mathtt{randn}(m, n) \cdot B$.
	$\varepsilon$ represents the noise level and $\varepsilon \in\{0.05,0.1,0.15,0.2\}$.
	The performance is assessed based on the 2-norm of the relative matrix approximation error, i.e.,
		$\mathrm{Err}={\|A-\widetilde{A}\|}/{\|A\|},$
	where $\widetilde{A}$ is the approximated low-rank matrix.
	
	\hskip 2em
	We first compare the accuracy of the
	GCUR algorithms with their randomized counterparts R-GCUR and the standard DEIM-CUR decomposition for reconstructing the low-rank matrix $A$ for different noise levels.
	As inputs, we fix $m=10000$, $n=300$ and using the target rank $k$ varies from $1$ to $50$, and the parameter contained in the L-DEIM procedure is $\widehat{k}=k/2$.
	The relative errors are plotted in Figure \ref{Fig-Exp1}.
	We observe that the GCUR and R-GCUR techniques achieve a comparable relative error.
	Consistent with the results in \cite{gidisu2022SIAMgeneralized}, the R-GCUR algorithm performs significantly well under high noise.
	Besides, we observe that, as $k$ approaches $\mathrm{rank}(A)$, however, the relative errors of both the GCUR and the R-GCUR do not decrease any more.
	\cite{gidisu2022SIAMgeneralized} attributes this phenomenon to the fact that the relative error is saturated by the noise, considering we pick the columns and rows of the noisy data.
	
	\hskip 2em
	The analysis of the proposed algorithms implies that our randomized algorithms are less expensive compared to their deterministic counterparts.
	To illustrate this, we record the running time in seconds (denoted as CPU) and the approximation quality Err of
	the GCUR and R-GCUR for reconstructing matrix $A$ for different noise levels $\varepsilon=0.2, 0.1, 0.05$ as the dimension and the target rank $k$ increase.
	According to the conclusions summarized in \cite{gidisu2022Arxivhybrid}, the L-DEIM procedure may be comparable to the original DEIM method when the target rank $k$ is at most twice the available $\widehat{k}$ singular vectors.
	Therefore, here we set the parameter $\widehat{k}$ contained in the L-DEIM to be $\widehat{k}=k/2$, and the oversampling parameter $p=5$.
	We record the results in Tables \ref{Table1-Exp1}-\ref{Table3-Exp1}.
	It is clear from the running time that the algorithms R-DEIM-GCUR and R-LDEIM-GCUR
	have a huge advantage in computing speed over the non-random GCUR method, and the R-LDEIM-GCUR achieves the smallest running time among the three sets of experiments.
	 \begin{figure}[t!]
		\centering
		\subfigure[$\varepsilon=0.2$]
		{\includegraphics[width=0.46\textwidth,height=0.3\textwidth]{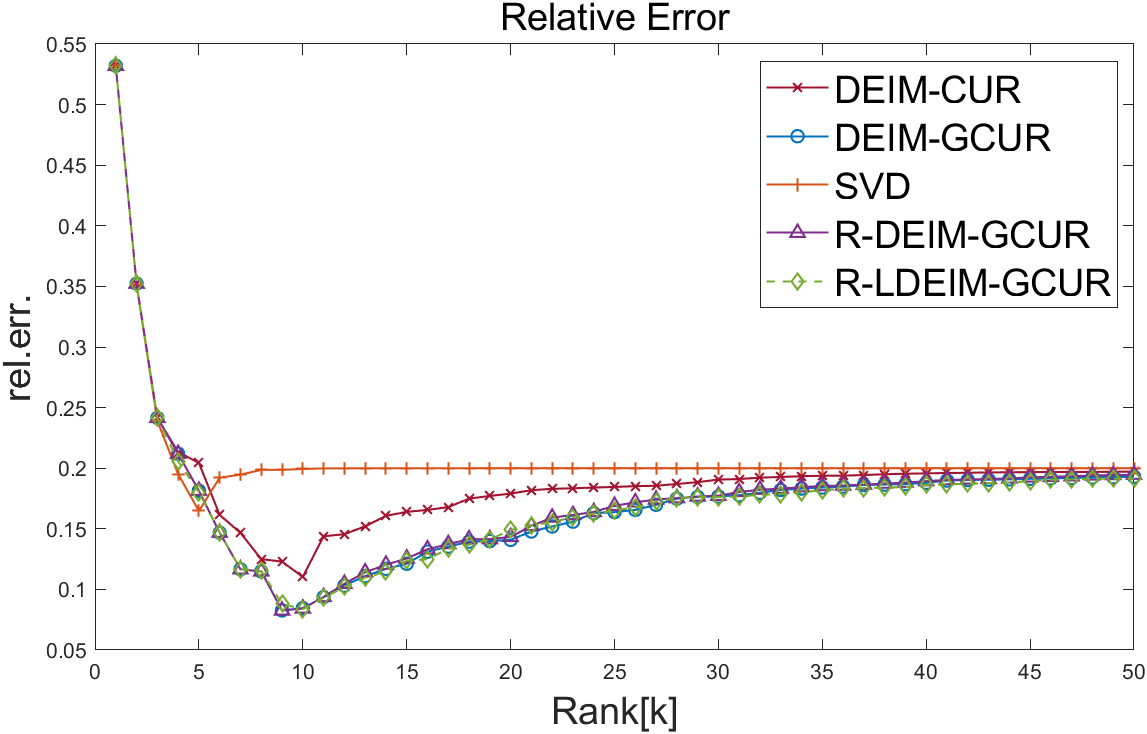}	}
		\subfigure[$\varepsilon=0.15$]
		{\includegraphics[width=0.46\textwidth,height=0.3\textwidth]{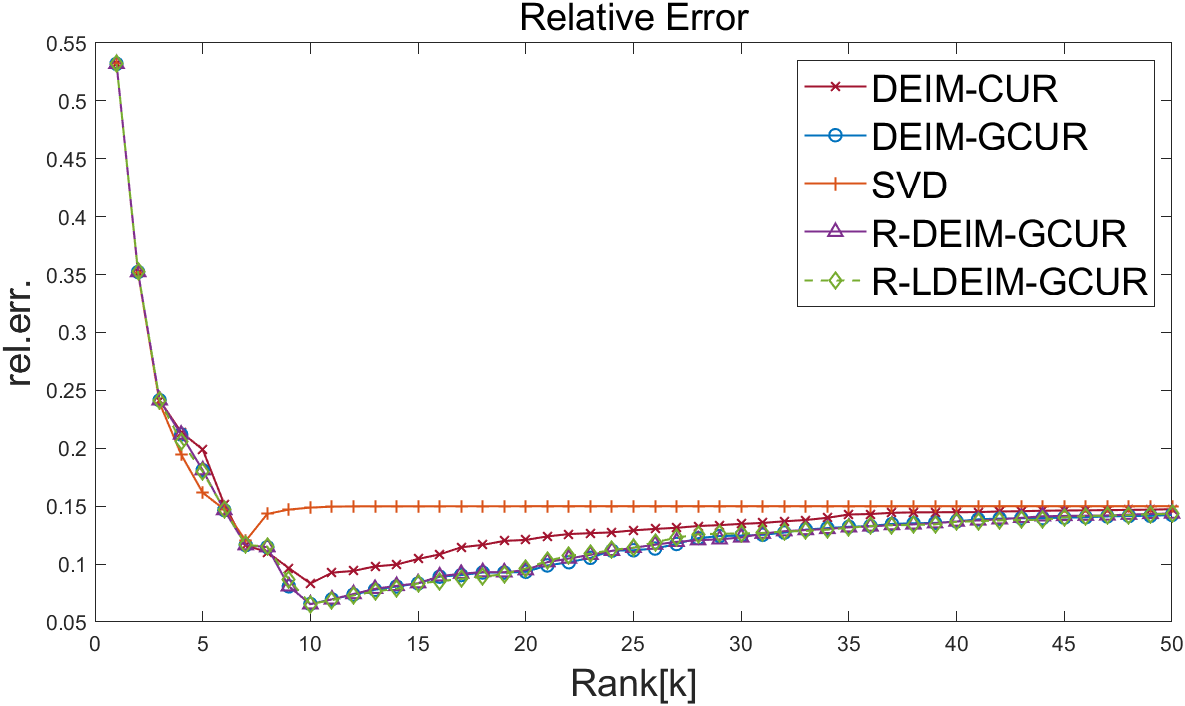}	}
		\subfigure[$\varepsilon=0.1$]
		{\includegraphics[width=0.46\textwidth,height=0.3\textwidth]{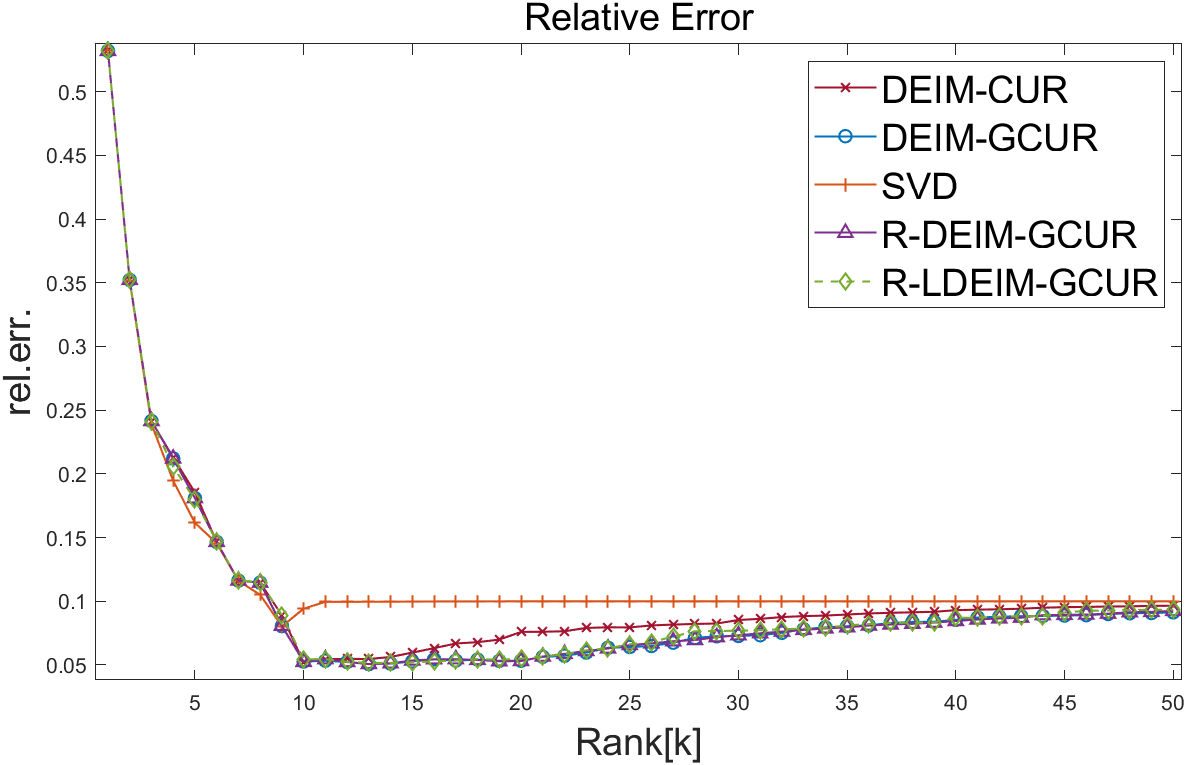}	}
		\subfigure[$\varepsilon=0.05$]
		{\includegraphics[width=0.46\textwidth,height=0.3\textwidth]{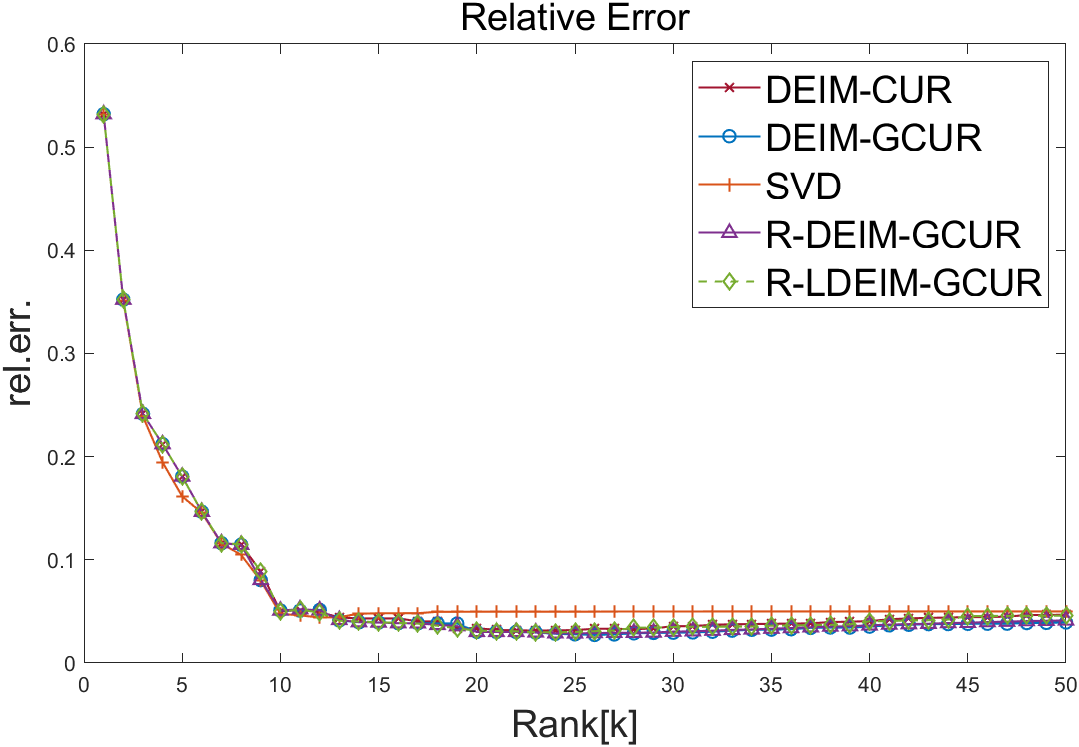}	}
				\caption{
				This figure depicts the accuracy comparison of R-GCUR approximations against the DEIM-CUR approximation and the GCUR decomposition in the recovery matrix $A$. 
				The relative errors are shown as a function of rank $k$ for varying values of $\varepsilon$: $0.2$, $0.15$, 0.1, and $0.05$.
			}
		\label{Fig-Exp1}
	\end{figure}

	\begin{table}[H]
		\caption{Comparison of GCUR and randomized algorithms (R-DEIM-GCUR and R-LDEIM-GCUR) in CPU and relative error as the dimension and the target rank $k$ increase, with noise level $\varepsilon=0.2$. }
		\label{Table1-Exp1}
		\setlength{\tabcolsep}{0.7mm}
		\centering
		\begin{tabular}{c c c c c c }
			\begin{tabular}{|c|c|c|c|c|c|}
				\hline
				\multicolumn{2}{|c|}{$(m,n,k)$ }
				& $(10000,200,20)$ & $(50000,200,20)$ &$(100000,500,30)$ & $(200000,1000,40)$\\
				\hline
				\multirow{2}{*}{ GCUR }
				& Err
				& $0.15725$ & $0.14842$ & $0.18058$ & $0.17292$ \\
				\cline { 2 - 6 }
				& CPU
				& $0.10197$ & $0.54697$ & $5.4971$ & $40.001$ \\
				\hline \multirow{2}{*}{ R-DEIM-GCUR }
				& Err
				& $0.14524$ & $0.16584$ & $0.18260$ & $0.17772$ \\
				\cline { 2 - 6 }
				& CPU
				& $0.027867$ & $0.11137$ & $0.49037$ & $2.4217$ \\
				\hline \multirow{2}{*}{ R-LDEIM-GCUR }
				& Err
				& $0.16173$ & $0.14640$ & $0.16955$ & $0.16758$ \\
				\cline { 2 - 6 }
				& CPU
				& $0.018809$ & $0.056930$ & $0.28019$ & $1.5563$ \\
				\hline
			\end{tabular}
		\end{tabular}	
	\end{table}
	\begin{table}[H]
		\caption{Comparison of GCUR and randomized algorithms (R-DEIM-GCUR and R-LDEIM-GCUR) in CPU and relative error as the dimension and the target rank $k$ increase, with noise level $\varepsilon=0.1$.}
		\label{Table2-Exp1}
		\setlength{\tabcolsep}{0.5mm}
		\centering
		\begin{tabular}{c c c c c c}
			\begin{tabular}{|c|c|c|c|c|c|}
				\hline
				\multicolumn{2}{|c|}{$(m,n,k)$ }
				& $(10000,1000,30)$ & $(100000,500,30)$ &$(200000,1000,40)$ & $(200000,1000,50)$ \\
				\hline
				\multirow{2}{*}{ GCUR } & Err
				& $0.16493$ & $0.18058$ & $0.17292$ & $0.18699$\\
				\cline { 2 - 6 } & CPU
				& $2.1302$ & $4.5977$ & $33.783$ & $51.365$ \\
				\hline \multirow{2}{*}{ R-DEIM-GCUR } & Err
				& $0.16524$ & $0.18260$ & $0.17772$ & $0.18614$\\
				\cline { 2 - 6 } & CPU
				& $0.56099$ & $0.47876$ & $2.0406$ & $3.7259$ \\
				\hline \multirow{2}{*}{ R-LDEIM-GCUR } & Err
				& $0.16906$ & $0.16955$ & $0.16758$ & $0.172631$\\
				\cline { 2 - 6 } & CPU
				& $0.50487$ & $0.27769$ & $1.2856$ & $1.5229$\\
				\hline
			\end{tabular}
		\end{tabular}	
	\end{table}
	\begin{table}[H]
		\caption{Comparison of GCUR and randomized algorithms (R-DEIM-GCUR and R-LDEIM-GCUR) in CPU and relative error as the dimension and the target rank $k$ increase, with noise level $\varepsilon=0.05$.}
		\label{Table3-Exp1}
		\setlength{\tabcolsep}{0.5mm}
		\centering
		\begin{tabular}{c c c c c c}
			\begin{tabular}{|c|c|c|c|c|c|}
				\hline
				\multicolumn{2}{|c|}{$(m,n,k)$ }
				& $(10000,500,20)$ & $(100000,500,30)$ &$(150000,1000,40)$ & $(200000,1000,50)$ \\
				\hline
				\multirow{2}{*}{ GCUR } & Err
				& $0.13828$ & $0.18058$ & $0.18230$ & $0.18699$ \\
				\cline { 2 - 6 } & CPU
				& $0.56859$ & $4.5496$ & $25.523$ & $32.360$ \\
				\hline \multirow{2}{*}{ R-DEIM-GCUR } & Err
				& $0.13089$ & $0.18260$ & $0.17513$ & $0.18614$ \\
				\cline { 2 - 6 } & CPU
				& $0.10336$ & $0.48947$ & $1.8595$ & $2.6152$  \\
				\hline \multirow{2}{*}{ R-LDEIM-GCUR } & Err
				& $0.13807$ & $0.16955$ & $0.17975$ & $0.17263$ \\
				\cline { 2 - 6 } & CPU
				& $0.079551$ & $0.28887$ & $1.0581$ & $1.4994$ \\
				\hline
			\end{tabular}
		\end{tabular}	
	\end{table}

	$\mathbf{Example}$ $\mathbf{5.2}$
	This experiment evaluates the performance of our randomized algorithms using synthetic data sets.
	These data sets are generated based on the procedures described in \cite[Example 5.3]{gidisu2022SIAMgeneralized} and \cite{abid2018Natureexploring},
	which provide valuable insights into scenarios where the CUR and GCUR techniques effectively address the issue of subgroups.
	Our task is to reduce the dimension of target data set $A$, which contains $4m$ data points in a $3d$-dimensional feature space with four different subgroups.
	Each of these subgroups has distinct variances and means, and their detailed characteristics are summarized in Figure \ref{groups}.
	\begin{figure}[htbp]
		\centering
		\includegraphics[width=0.7\textwidth,height=0.3\textwidth]{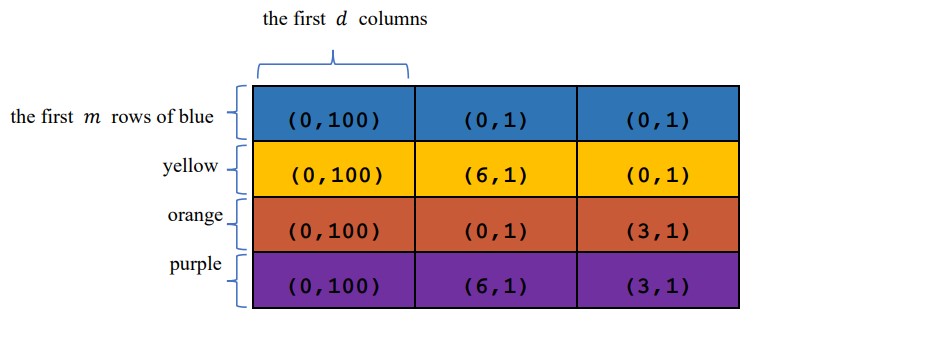}	
		\caption{
			The $4m\times 3d$ data set $A$ contains four subgroups, and each group has $m$ data points with different mean and variance.
			The blocks labeled $(a,b)$ denote that it is sampled from a normal
			distribution with a mean of $a$ and a variance of $b$.
		}
		\label{groups}
	\end{figure}
	Usually, this task can be accomplished by the principal component analysis based on SVD, but this method fails due to the variation along the first $d$ columns of the target data set is significantly larger than in any other direction.
	\cite[Example 5.3]{gidisu2022SIAMgeneralized} demonstrates that contrastive principal component analysis (cPCA)\cite{abid2018Natureexploring} can effectively solve this problem,
	which identifies low-dimensional space that is enriched in a dataset relative to comparison data.
	Specifically, following the operations in \cite{gidisu2022SIAMgeneralized}, we construct the background data set $B$, which is drawn from a normal distribution,
	and the variance and mean of the first $d$ columns, columns $d+1$ to $2d$, and the last $d$ columns of $B$ are $100,9,1$ and $0,0,0$, respectively.
	Then we can extract characteristics for clustering the subgroups in $A$ by optimizing the variance of $A$ while minimizing that of $B$, which leads to a trace ratio maximization problem \cite{chen2018IEEEnonlinear}
	\begin{equation*}
		\widehat{U}=
		\underset{U \in \mathbb{R}^{3d \times k}, U^{\mathrm{T}} U=I_k}{\operatorname{argmax}}
		\operatorname{Tr}\left[\left(U^T B^T B U\right)^{-1}\left(U^T A^T A U\right)\right].
	\end{equation*}
	In this experiment, we set $m=2500$, $d=200$.
        \begin{figure}[t!]
		\centering
		\subfigure
		{\includegraphics[width=0.46\textwidth,height=0.3\textwidth]{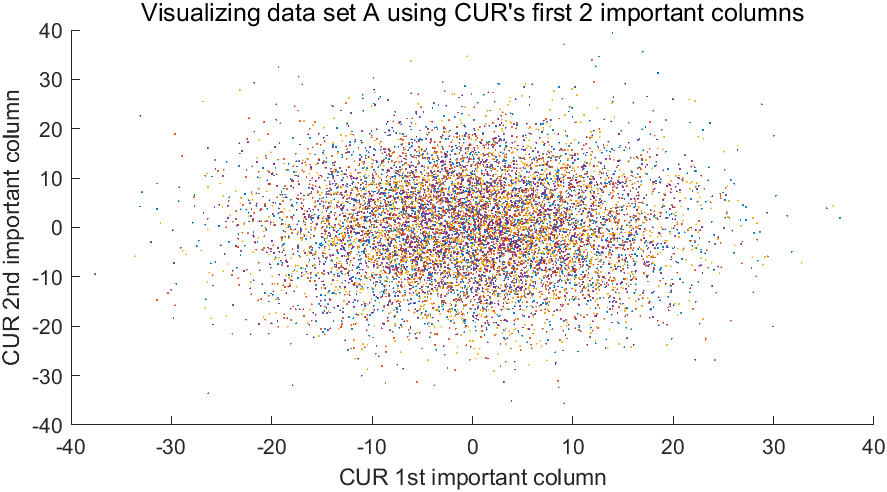}	}
		\subfigure
		{\includegraphics[width=0.46\textwidth,height=0.3\textwidth]{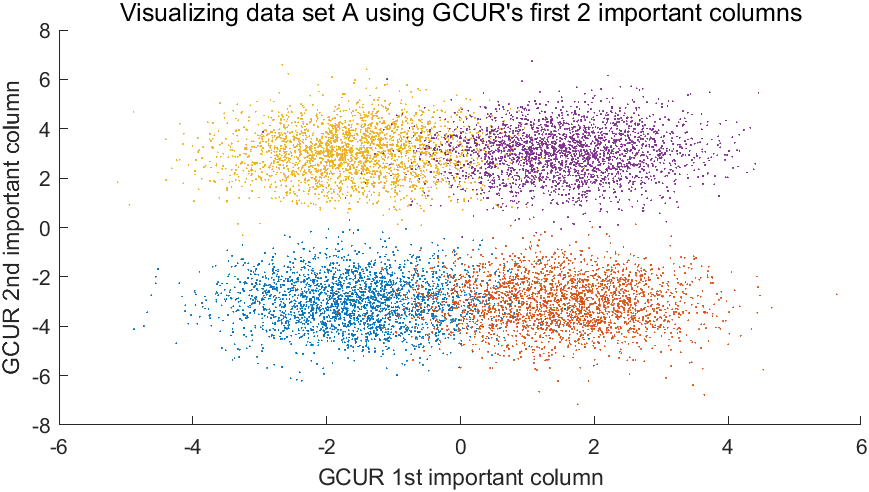}	}
		\subfigure
		{\includegraphics[width=0.46\textwidth,height=0.3\textwidth]{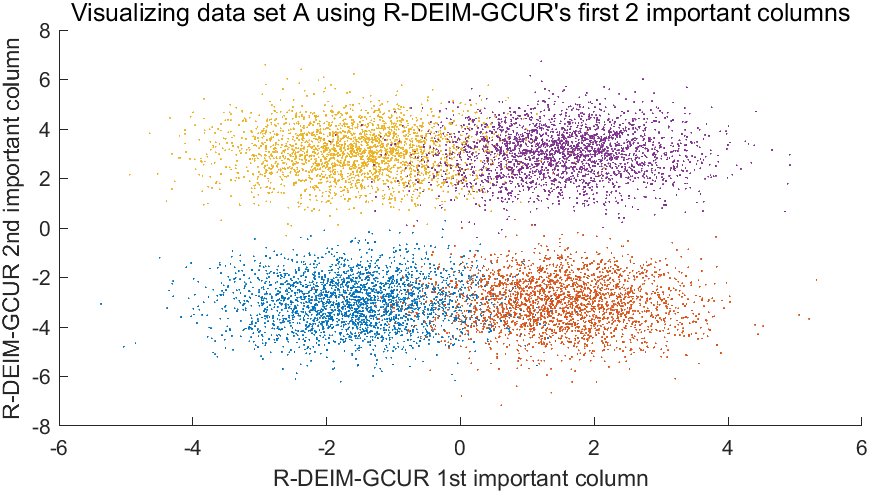}	}
		\subfigure
		{\includegraphics[width=0.46\textwidth,height=0.3\textwidth]{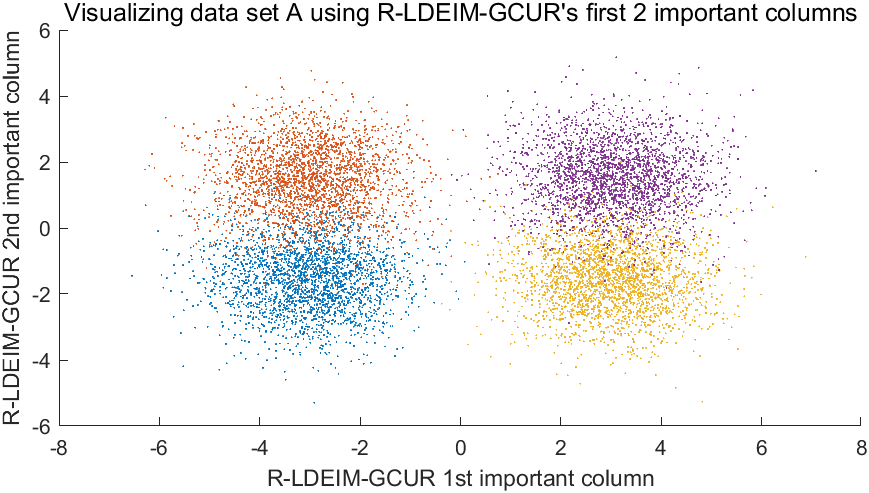}	}
		\caption{
			In the top figure, we visualize the data using the first two columns selected by CUR (left) and GCUR (right), respectively.
			In the bottom figure, we visualize the data using the first two columns selected by the R-DEIM-GCUR (left) and R-LDEIM-GCUR (right).
			In this experiment, we set $m=2500$, $d=200$ and $k=30$.
			The input oversampling parameters for the R-DEIM-GCUR and R-LDEIM-GCUR are set to be $70$ and $100$, respectively, and $\widehat{k}=k/2$.
				}
		\label{Fig1-Exp2}
	    \end{figure}
	Figure \ref{Fig1-Exp2} 
	is a visualization
	of the data using the first two important columns selected using the algorithms DEIM-CUR, GCUR, R-DEIM-GCUR and R-LDEM-GCUR for two of input matrix dimensions, respectively.
	It can be seen that the GCUR and R-GCUR methods produce a much clearer subgroup
	separation than the CUR.
	To a large extent, the GCUR and R-GCUR are able to differentiate the subgroups, while the CUR fails to do so.
	In terms of the running time, 
	the nonrandom GCUR costs $1.4250$ seconds while the R-DEIM-GCUR and R-LDEIM-GCUR spend $0.83059$ seconds and $0.82679$ seconds.
	
	\hskip 2em
	For further investigation, we emulate the manipulations in \cite{gidisu2022SIAMgeneralized} by comparing the performance of three subset selection methods: DEIM-CUR on $A$, GCUR, and R-GCUR on the matrix pair $(A, B)$, in identifying the subgroups of $A$.
	We accomplish this by selecting a subset of the columns of $A$ and comparing the classification results of each method.
	To evaluate the effectiveness of each method, we perform a 10-fold cross-validation \cite[p. 181]{james2013introduction} and apply the ECOC (Error-Correcting Output Codes) [13] and classification tree \cite{banfield2006IEEEcomparison} as the classifiers on the reduced data set, using the functions $\mathtt{fitcecoc}$ and $\mathtt{fitctree}$ with default parameters in MATLAB.
	Our results, presented in Table \ref{Table1-Exp2}, demonstrate that the R-LDEIM-GCUR method achieves the lowest classification error rate using both the ECOC and tree classifiers, while the standard DEIM-CUR method performs the worst.

	\begin{table}[htbp]
		\caption{k-Fold loss is the average classification loss overall 10-fold using CUR, GCUR, and R-GCUR as dimension reduction.
		The second and third columns give dimension information $m_1=2500$, $d_1=200$, $m_2=3000$, $d_2=200$,
		and the information on the number of columns $k=30$, selected from the data set using GCUR and R-GCUR for the ECOC classifier, likewise for the fifth and sixth columns for the tree classifier.}
		\label{Table1-Exp2}
		\setlength{\tabcolsep}{0.5mm}
		\centering
	\begin{tabular}{llllll}
		\hline Method & \multicolumn{2}{c}{$k$-Fold Loss } & Method & \multicolumn{2}{c}{$k$-Fold Loss } \\
		& \multicolumn{1}{c}{($m_1,d_1$)} & ($m_2,d_2$) & & \multicolumn{1}{c}{($m_1,d_1$)} & ($m_2,d_2$) \\
		\hline
		CUR+ECOC & $0.7512$ & $0.7521$ & CUR+Tree & $0.7488$ & $0.7465$ \\
		GCUR+ECOC & $0.0669$ & $0.0666$ & GCUR+Tree & $0.0986$ & $0.09758$ \\
		R-DEIM-GCUR+ECOC & $0.06930$ & $0.06700$ & R-DEIM-GCUR+Tree & $0.1000$ & $0.09558$ \\
		R-LDEIM-GCUR+ECOC & $0.06680$ & $0.06358$ & R-LDEIM-GCUR+Tree & $0.0980$ & $0.09691$ \\
		\hline
	\end{tabular}
	\end{table}
	$\mathbf{Example}$ $\mathbf{5.3}$
	This experiment is adapted from \cite[Experiment 5.4]{gidisu2022SIAMgeneralized}.
	In this study, we evaluate the performance of R-GCUR, GCUR, and CUR on public data sets.
	Specifically, we analyze single-cell RNA expression levels of bone marrow mononuclear cells (BMMCs) obtained from an acute myeloid leukemia (AML) patient and two healthy individuals.
	The data sets are processed as in \cite{boileau2020BIOexploring}, retaining the 1000 most variable genes across all 16856 cells,
	which includes 4501 cells of the patient-035, 1985 cells, and the other of 2472 cells of the two healthy individuals.
	By the operation in \cite{gidisu2022SIAMgeneralized}, we construct the $4457 \times 1000$ sparse background data matrix from the two healthy patients, containing $1,496,229$ non-zeros entries, and the target data matrix from patient-035, which has $1,628,174$ non-zeros entries.
	Our objective is to investigate the ability of CUR, GCUR, and R-GCUR to capture the biologically meaningful information related to the AML patient's BMMC cells pre- and post-transplant.
	As depicted in Figure \ref{Fig1-Exp3}, the GCUR and R-GCUR produce nearly linearly separable clusters that correspond to pre- and post-treatment cells.
	The R-DEIM-GCUR and R-LDEIM-GCUR outperform GCUR in terms of running time due to their significantly lower computational cost.
	Notably, the running time of the nonrandom GCUR algorithm is roughly twice that of our randomized algorithms.
	Importantly, all methods successfully capture biologically meaningful information, effectively separating the pre- and post-transplant cell samples.
	\begin{figure}[t!]
		\centering
		\subfigure
		{\includegraphics[width=0.46\textwidth,height=0.3\textwidth]{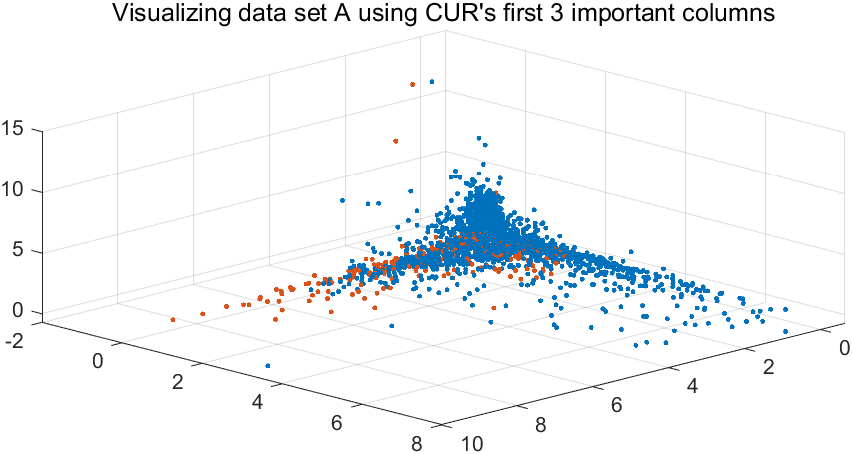}	}
		\subfigure
		{\includegraphics[width=0.46\textwidth,height=0.3\textwidth]{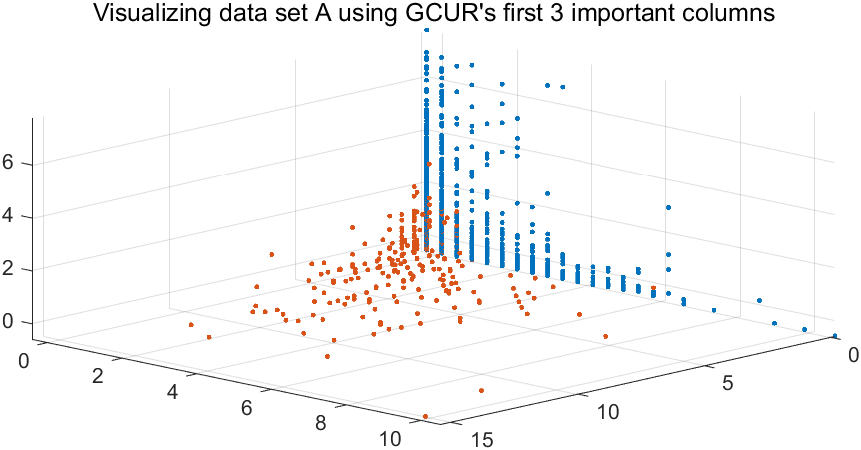}	}
		\subfigure
		{\includegraphics[width=0.46\textwidth,height=0.3\textwidth]{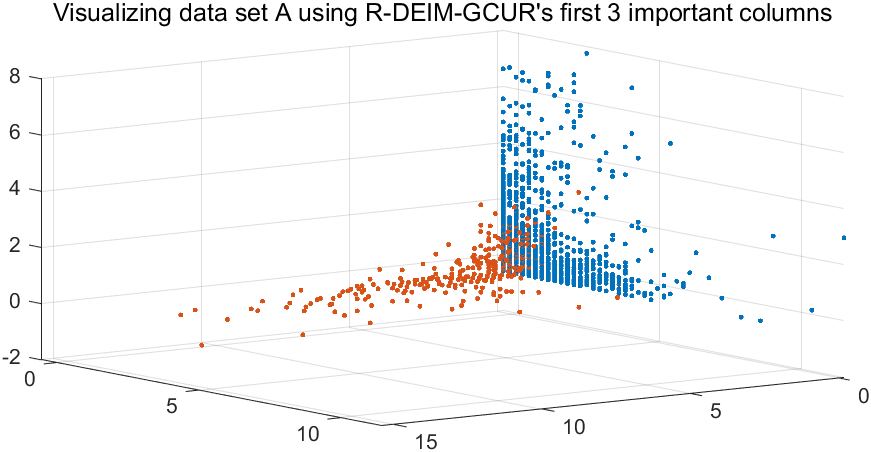}	}
		\subfigure
		{\includegraphics[width=0.46\textwidth,height=0.3\textwidth]{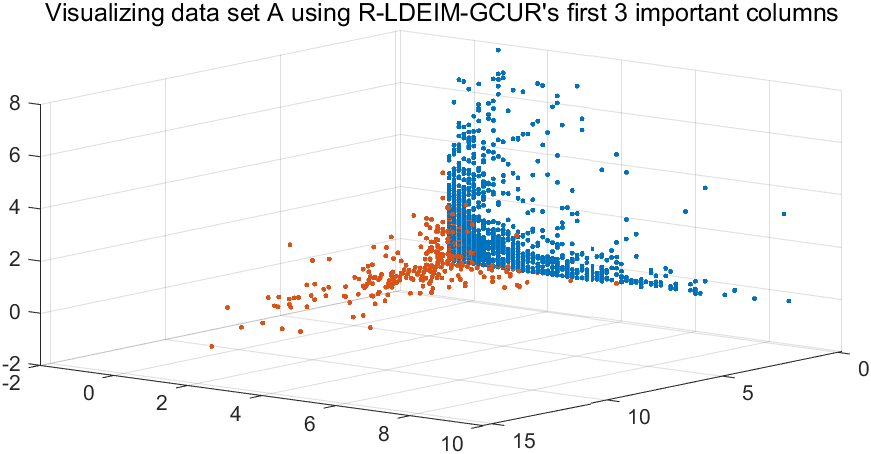}	}
		\caption{
			Acute myeloid leukemia patient-035 scRNA-seq data.
			In the top figure, we visualize the data using
			the first three genes selected by DEIM-CUR (top-left) and GCUR (top-right), and the CUR does not effectively give a discernible cluster of the pre- and post transplant cells.
			In the bottom figure, we visualize the data using the first three genes selected by R-DEIM-GCUR (bottom-left) and R-LDEIM-GCUR (bottom-right), which both produce almost linearly separable clusters which correspond to pre- and posttreatment cells.
		}
		\label{Fig1-Exp3}
	\end{figure}

	$\mathbf{Example}$ $\mathbf{5.4}$
	This experiment demonstrates the performance of our randomized algorithm for producing the RSVD-CUR decomposition.
	This test is an adaptation of \cite[Experiment 1]{gidisu2022ARXIVrsvd}, which considers a matrix perturbation problem of the form
		$A_E = A + B F G$,
	where $A\in\mathbb{R}^{m \times n}$,
	matrices $B\in\mathbb{R}^{m \times l}$, $G\in\mathbb{R}^{d \times n}$
	are noises distributed normally with mean $0$ and unit variance,
	and our goal is to reconstruct a low-rank matrix $A$ from $A_E$.
	We evaluate and compare a rank-$k$ RSVD-CUR decomposition of $A_E$, obtained by the nonrandom RSVD-CUR algorithm and its counterpart randomized algorithm, in terms of reconstructing matrix $A$ and the running time.
	The approximation quality of the decomposition is assessed by the relative
	matrix approximation error, i.e.,
	$\mathrm{Err}=\|A-\widetilde{A}\| /\|A\|$,
	where $\widetilde{A}$ is the reconstructed low-rank matrix.
	As an adaptation of the experiment in
	\cite[Example 1]{sorensen2016SIAMdeim} and \cite[Experiment 1]{gidisu2022ARXIVrsvd},
	we generate a rank-$100$ sparse nonnegative matrix
	$A\in \mathbb{R}^{m\times n}$ of the form

	\begin{equation*}
		A=\sum_{j=1}^{10} \frac{2}{j} \mathbf{x}_j \mathbf{y}_j^{\mathrm{T}}+\sum_{j=11}^{100} \frac{1}{j} \mathbf{x}_j \mathbf{y}_j^{\mathrm{T}}
	\end{equation*}
	where $\mathbf{x}_j\in\mathbb{R}^{m}$ and $\mathbf{x}_j\in\mathbb{R}^{n}$
	are random sparse vectors with nonnegative entries.
	We then perturb $A$ with a nonwhite noise matrix $BFG$ \cite{hansen1998SIAMrank}.
	The resulting perturbed	matrix we use is of the form
	\begin{equation*}
		A_E=A+\varepsilon \frac{\|A\|}{\|B F G\|} B F G,
	\end{equation*}
	where $\varepsilon$ is the noise level.
	Given each noise level $\varepsilon\in\{0.1, 0.15, 0.2\}$, we generate the RSVD-CUR decomposition computed by the RSVD-CUR algorithms and the randomized algorithm for varying dimensions and the target rank $k$ values.
	Here we set the parameter $\widehat{k}$, contained in the L-DEIM to be $\widehat{k} = k/2$ and $\widehat{k}=k$, respectively.
	The corresponding results are displayed in Tables \ref{Table1-Exp4}, \ref{Table2-Exp4} and \ref{Table3-Exp4}, where we can see that the randomized algorithms give comparable relative errors at substantially less cost.
	It indicates that using the random sampling techniques and L-DEIM
	method leads to a dramatic speed-up over classical approaches.
	\begin{table}[t!]
		\caption{Comparison of RSVD-CUR and randomized algorithms in CPU and relative error as the dimension $l$, $d$, $m$, $n$ ( we set $m=n$) and the target rank $k$ increase, with noise level $\varepsilon=0.1$.  }
		\label{Table1-Exp4}
		\setlength{\tabcolsep}{0.1mm}
		\centering
		\begin{tabular}{|ccc|c|c|c|}
			\hline
			\multicolumn{3}{|c|}{$(l,d,m,k)$}
			& $(1000,500,100,10)$  & $(5000,1000,100,20)$  & $(7000,2000,200,30)$  \\ \hline
			\multicolumn{2}{|c|}{\multirow{2}{*}{DEIM-RSVD-CUR}}
			& Err & $0.095573$  & $0.085198$  & $0.084425$  \\ \cline{3-6}
			\multicolumn{2}{|c|}{}
			& CPU & $0.099726$  & $8.1768$ & $15.251$ \\ \hline
			\multicolumn{2}{|c|}{\multirow{2}{*}{LDEIM-RSVD-CUR}}
			& Err & $0.11652$ & $0.094442$ & $0.083729$ \\ \cline{3-6}
			\multicolumn{2}{|c|}{}
			& CPU & $0.098987$ & $8.5575$ & $15.886$ \\ \hline
			\multicolumn{3}{|c|}{oversampling parameter}
			& $80$  & $500$  & $500$  \\ \hline
			\multicolumn{1}{|c|}{\multirow{4}{*}{R-LDEIM-RSVD-CUR}}
			& \multicolumn{1}{c|}{\multirow{2}{*}{$k=\widehat{k}$}}
			& Err & $0.095573$ & $0.085198$ & $0.084425$ \\ \cline{3-6}
			\multicolumn{1}{|c|}{} & \multicolumn{1}{c|}{}
			& CPU & $0.028330$ & $0.057914$ & $0.39295$ \\ \cline{2-6}
			\multicolumn{1}{|c|}{}
			& \multicolumn{1}{c|}{\multirow{2}{*}{$\widehat{k}=k/2$}}
			& Err & $0.095573$ & $0.085198$ & $0.084425$ \\ \cline{3-6}
			\multicolumn{1}{|c|}{} & \multicolumn{1}{c|}{}
			& CPU & $0.024517$ & $0.056423$ & $0.50397$ \\ \hline
		\end{tabular}

	\end{table}

	\begin{table}[t!]
		\caption{Comparison of RSVD-CUR and randomized algorithms in CPU and relative error as the dimension $l$, $d$, $m$, $n$ ( we set $m=n$) and the target rank $k$ increase, with noise level $\varepsilon=0.15$.  }
		\label{Table2-Exp4}
		\setlength{\tabcolsep}{0.1mm}
		\centering
			\begin{tabular}{|ccc|c|c|c|}
				\hline
				\multicolumn{3}{|c|}{$(l,d,m,k)$}
				& $(5000,1000,200,20)$  & $(10000,2000,500,30)$  & $(20000,2000,500,40)$  \\ \hline
				\multicolumn{2}{|c|}{\multirow{2}{*}{DEIM-RSVD-CUR}}
				& Err & $0.13123$  & $0.15709$  & $0.14705$  \\ \cline{3-6}
				\multicolumn{2}{|c|}{}
				& CPU & $7.5313$  & $62.594$ & $328.99$ \\ \hline
				\multicolumn{2}{|c|}{\multirow{2}{*}{LDEIM-RSVD-CUR}}
				& Err & $0.13103$ & $0.16492$ & $0.15604$ \\ \cline{3-6}
				\multicolumn{2}{|c|}{}
				& CPU & $7.3077$ & $61.396$ & $330.20$ \\ \hline
				\multicolumn{3}{|c|}{oversampling parameter}
				& $500$  & $500$  & $500$  \\ \hline
				\multicolumn{1}{|c|}{\multirow{4}{*}{R-LDEIM-RSVD-CUR}}
				& \multicolumn{1}{c|}{\multirow{2}{*}{$k=\widehat{k}$}}
				& Err & $0.13123$ & $0.15709$ & $0.14705$ \\ \cline{3-6}
				\multicolumn{1}{|c|}{} & \multicolumn{1}{c|}{}
				& CPU & $0.33164$ & $3.0591$ & $2.7742$ \\ \cline{2-6}
				\multicolumn{1}{|c|}{}
				& \multicolumn{1}{c|}{\multirow{2}{*}{$\widehat{k}=k/2$}}
				& Err & $0.13123$ & $0.15709$ & $0.14705$ \\ \cline{3-6}
				\multicolumn{1}{|c|}{} & \multicolumn{1}{c|}{}
				& CPU & $0.34972$ & $2.5485$ & $2.9289$ \\ \hline
			\end{tabular}
	\end{table}
	
	\begin{table}[t!]
		\caption{Comparison of RSVD-CUR and randomized algorithms in CPU and relative error as the dimension $l$, $d$, $m$, $n$ ( we set $m=n$) and the target rank $k$ increase, with noise level $\varepsilon=0.2$.  }
		\label{Table3-Exp4}
		\setlength{\tabcolsep}{0.1mm}
		\centering
		\begin{tabular}{|ccc|c|c|c|}
			\hline
			\multicolumn{3}{|c|}{$(l,d,m,k)$}
			& $(5000,1000,100,10)$  & $(10000,1000,500,30)$  & $(7000,2000,200,30)$  \\ \hline
			\multicolumn{2}{|c|}{\multirow{2}{*}{DEIM-RSVD-CUR}}
			& Err & $0.15325$  & $0.18345$  & $0.18429$  \\ \cline{3-6}
			\multicolumn{2}{|c|}{}
			& CPU & $7.9139$  & $56.001$ & $384.17$ \\ \hline
			\multicolumn{2}{|c|}{\multirow{2}{*}{LDEIM-RSVD-CUR}}
			& Err & $0.14943$ & $0.21517$ & $0.19300$ \\ \cline{3-6}
			\multicolumn{2}{|c|}{}
			& CPU & $7.9627$ & $51.571$ & $357.91$ \\ \hline
			\multicolumn{3}{|c|}{oversampling parameter}
			& $100$  & $500$  & $500$  \\ \hline
			\multicolumn{1}{|c|}{\multirow{4}{*}{R-LDEIM-RSVD-CUR}}
			& \multicolumn{1}{c|}{\multirow{2}{*}{$k=\widehat{k}$}}
			& Err & $0.15325$ & $0.18345$ & $0.18429$ \\ \cline{3-6}
			\multicolumn{1}{|c|}{} & \multicolumn{1}{c|}{}
			& CPU & $0.079395$ & $2.1681$ & $3.9116$ \\ \cline{2-6}
			\multicolumn{1}{|c|}{}
			& \multicolumn{1}{c|}{\multirow{2}{*}{$\widehat{k}=k/2$}}
			& Err & $0.15325$ & $0.18345$ & $0.18429$ \\ \cline{3-6}
			\multicolumn{1}{|c|}{} & \multicolumn{1}{c|}{}
			& CPU & $0.06830$ & $2.0079$ & $3.8923$ \\ \hline
		\end{tabular}
	\end{table}
	\section{Conclusion}
	\hskip 2em
	In this paper, by combining the random sampling techniques with the L-DEIM method, we
	develop new efficient randomized algorithms for computing the GCUR decomposition for matrix pairs and the RSVD-CUR decomposition for matrix triplets with a given target rank.
	We also provided the detailed probabilistic analysis for the proposed randomized algorithms.
	Theoretical analyses and numerical examples illustrate that exploiting the randomized techniques results in a significant improvement in terms of the CPU time while keeping a high degree of accuracy.
	Finally, it is natural to consider applying the L-DEIM for developing randomized algorithms that adaptively find a low-rank representation satisfying a given tolerance, which is beneficial when the target rank is not known in advance, and it will be discussed in our future work.
	

  \section*{Funding}
  Z. Cao is supported by the National Natural Science Foundation of China under Grant 11801534.
  Y. Wei is supported by the National Natural Science Foundation of China under Grant 12271108 and the Innovation Program of Shanghai Municipal Education Committee.
  P. Xie is supported by the National Natural Science Foundation of China under Grants 12271108, 11801534 and the Fundamental Research Funds for the Central Universities under Grant 202264006.

    \section*{Declarations}
  The authors have not disclosed any competing interests.

    \section*{Data  Availability  Statements}
  All datasets are publicly available.

	\bibliographystyle{siam}
	\bibliography{RGCUR}	
\end{document}